\newtheorem{defn}{Definition}
\newcommand{\be}{\begin{equation}}
\newcommand{\ee}{\end{equation}}
\DeclareMathOperator*{\Argmax}{Argmax}  
\DeclareMathOperator*{\Argmin}{Argmin}  
\DeclareMathOperator*{\argmin}{argmin}  
\def\diag{\mbox{diag}}
\def\bA{\bm{A}}
\def\bE{\bm{E}}
\def\bI{\bm{I}}
\def\bK{\bm{K}}
\def\bP{\bm{P}}
\def\bQ{\bm{Q}}
\def\bR{\bm{R}}
\def\bU{\bm{U}}
\def\bV{\bm{V}}
\def\bW{\bm{W}}
\def\bY{\bm{Y}}
\def\bLbd{\bm{\Lambda}}
\def\tbR{\tilde{\bm{R}}}
\def\bh{\bm{h}}
\def\b0{\bm{0}}
\def\bc{\bm{c}}
\def\bcp{\bm{c}^\prime}
\def\bee{\bm{e}}
\def\bk{\bm{k}}
\def\bo{\bm{1}}
\def\bq{\bm{q}}
\def\bu{\bm{u}}
\def\bv{\bm{v}}
\def\bvp{\bm{v}^\prime}
\def\bx{\bm{x}}
\def\bxs{\bm{x}^*}
\def\by{\bm{y}}
\def\bz{\bm{z}}
\def\bD{\bm{D}}
\def\bH{\bm{H}}
\def\he{\hat{\bm{e}}}
\def\E{\mathbb{E}}
\def\R{\mathbb{R}}
\def\mcF{\mathcal{F}}
\def\mF{\mathcal{F}}
\def\mO{\mathcal{O}}
\def\mP{\mathcal{P}}
\def\P{\mathbb{P}}
\def\mcI{\mathcal{I}}
\def\mcJ{\mathcal{J}}
\begin{document}

\title{Non-Convex Exact Community Recovery in Stochastic Block Model \thanks{A preliminary version of this work has appeared in the Proceedings of the 37th International Conference on
Machine Learning (ICML 2020), 2020 \cite{wang2020non}. The first and third authors are supported in part by the Hong Kong Research Grants Council (RGC) General Research Fund (GRF) project CUHK 14208117 and in part by the CUHK Research Sustainability of Major RGC Funding Schemes project 3133236. The second author is supported in part by the National Natural Science Foundation of China (NSFC) project 11901490 and in part by a HKBU Start-up Grant. Most of the work of the second author was done when he was affiliated with the Department of Mathematics of the Hong Kong Baptist University.}
}

\titlerunning{Non-Convex Exact Community Recovery}        

\author{Peng Wang  \and
        Zirui Zhou \and \\ Anthony Man-Cho So 
}

\authorrunning{P. Wang, Z. Zhou, A. M.-C. So} 

\institute{Peng Wang \at
              Department of Systems Engineering and Engineering Management \\ The Chinese University of Hong Kong, Shatin, NT, Hong Kong \\
              \email{wangpeng@se.cuhk.edu.hk}           
           \and
           Zirui Zhou \at
           Huawei Technologies Canada Co., Ltd., Burnaby, Canada \\
           \email{zirui.zhou@huawei.com}
           \and 
           Anthony Man-Cho So \at
              Department of Systems Engineering and Engineering Management \\ The Chinese University of Hong Kong, Shatin, NT, Hong Kong \\
              \email{manchoso@se.cuhk.edu.hk} 
}

\date{Received: date / Accepted: date}

\maketitle

\begin{abstract}
Community detection in graphs that are generated according to stochastic block models (SBMs) has received much attention lately. In this paper, we focus on the binary symmetric SBM---in which a graph of $n$ vertices is randomly generated by first partitioning the vertices into two equal-sized communities and then connecting each pair of vertices with probability that depends on their community memberships---and study the associated exact community recovery problem.
Although the maximum-likelihood formulation of the problem is non-convex and discrete, we propose to tackle it using a popular iterative method called projected power iterations. To ensure fast convergence of the method, we initialize it using a point that is generated by another iterative method called orthogonal iterations, which is a classic method for computing invariant subspaces of a symmetric matrix. We show that in the logarithmic sparsity regime of the problem, with high probability the proposed two-stage method can exactly recover the two communities down to the information-theoretic limit in $\mO(n\log^2n/\log\log n)$ time, which is competitive with a host of existing state-of-the-art methods that have the same recovery performance. We also conduct numerical experiments on both synthetic and real data sets to demonstrate the efficacy of our proposed method and complement our theoretical development. 
\keywords{ community detection \and exact recovery \and orthogonal iteration \and projected power iteration \and finite termination \and nearly-linear time}
\end{abstract}

\section{Introduction}
\label{intro}

Community detection is a fundamental task in network analysis and has found many applications in diverse fields such as physics \cite{fortunato2010community,newman2004finding}, biology \cite{cline2007integration}, and social science \cite{girvan2002community}, to name a few. In research on community detection, the stochastic block model (SBM), which provides a way to generate graphs with community structure, is widely used as a platform for validating theoretical ideas and comparing numerical algorithms. In particular, substantial advances have been made in the past decade on understanding the fundamental limits of community detection in graphs that are generated by SBMs, and on developing computationally tractable methods that can meet different recovery requirements up to their corresponding fundamental limits; see, e.g., \cite{abbe2018community} and the references therein.

One problem that has been extensively studied in the literature is the exact recovery of communities in the binary symmetric SBM (also known as the planted bisection model). Specifically, given an $n$-vertex graph with two equal-sized hidden communities, and each pair of vertices in the graph is connected by an edge with probability $p$ if they both belong to the same community and with probability $q$ otherwise, the goal is to achieve \emph{exact recovery} (i.e., recover the underlying communities exactly with high probability) using only the adjacency matrix of the graph. It is well known that whether exact recovery is achievable depends on the scalings of $p$, $q$, and $p-q$. When $p = a/n$ and $q = b/n$ for some $a>b>0$ (the constant sparsity regime), it is impossible to recover the communities because the graph is disconnected with high probability \cite{decelle2011asymptotic}. On the other hand, when $p = \alpha\log n/n$ and $q = \beta\log n/n$ for some $\alpha>\beta>0$ (the logarithmic sparsity regime), Abbe et al. \cite{abbe2016exact} and Mossel et al. \cite{mossel2014consistency} independently showed that exact recovery is impossible if $\sqrt{\alpha} - \sqrt{\beta}<\sqrt{2}$ but is possible if $\sqrt{\alpha} - \sqrt{\beta} > \sqrt{2}$, thereby establishing a sharp threshold for exact recovery. The proof of Abbe et al.~\cite{abbe2016exact} takes an information-theoretic approach and obtains the said threshold by analyzing the following maximum-likelihood (ML) formulation of the problem:
\begin{equation}
\label{eq:MLE} \tag{{\sf MLE}}
\max \left\{\bx^T\bA\bx: \, \mathbf{1}_n^T\bm{x}=0, \, x_i \in \{\pm 1\}, \, i = 1,\dots,n \right\}.
\end{equation}
Here, $\bA$ is the adjacency matrix of the graph, $\bo_n$ is the all-one vector of dimension $n$, and $x_i\in\{\pm1\}$ encodes the community membership of vertex $i$ for $i=1,\ldots,n$. It is shown in~\cite{abbe2016exact} that when $\sqrt{\alpha}-\sqrt{\beta}<\sqrt{2}$, the ML estimator (i.e., an optimal solution of \eqref{eq:MLE}) fails to recover the communities with probability bounded away from zero for sufficiently large $n$, but when $\sqrt{\alpha}-\sqrt{\beta}>\sqrt{2}$, the ML estimator can exactly recover the communities with high probability.

From a computational point of view, solving Problem~\eqref{eq:MLE} amounts to finding a minimum bisection of a graph, which is NP-hard in the worst case~\cite{garey1974some}. Over the past few decades, many algorithms have been proposed to tackle the problem of exact community recovery in the binary symmetric SBM; see, e.g.,~\cite{abbe2018community} for a summary of some of the earlier works. In view of the information-theoretic limit established in~\cite{abbe2016exact,mossel2014consistency}, a natural task is to design efficient algorithms that can exactly recover the communities down to the limit. This has also been undertaken  in~\cite{abbe2016exact,mossel2014consistency}. The former presents a two-stage algorithm that combines the partial recovery algorithm of Massouli\'{e}~\cite{massoulie2014community} with a local improvement procedure, while the latter gives a three-stage algorithm that uses spectral clustering for initialization and then combines a partial recovery step with a local refinement procedure. The former also presents an algorithm based on a semidefinite relaxation (SDR) of Problem~\eqref{eq:MLE} and poses the conjecture that the algorithm can exactly recover the communities down to the information-theoretic limit. The conjecture was later resolved in the affirmative independently by Hajek et al.~\cite{hajek2016achieving} and Bandeira~\cite{bandeira2018random}. 

Subsequent to the above development, a variety of efficient algorithms with the same recovery performance in the binary symmetric SBM 
have appeared in the literature. For instance, Abbe and Sandon~\cite{abbe2015community} developed a two-stage algorithm that is similar in spirit to the one in~\cite{abbe2016exact}. Yun and Proutiere \cite{yun2016optimal} presented a spectral partition algorithm, which proceeds by applying spectral decomposition to a trimmed adjacency matrix, followed by some local improvements. Later, Gao et al.~\cite{gao2017achieving} proposed a two-stage algorithm that employs spectral clustering for initialization and penalized local maximum likelihood estimation for local refinement. It is worth noting that the aforementioned algorithms apply not only to the binary symmetric SBM but also to more general SBMs. More recently, Abbe et al. \cite{abbe2017entrywise} showed that the vanilla spectral method, which computes the eigenvector associated with the second largest eigenvalue of the adjacency matrix and uses the signs of the entries to identify the communities, already has the desired recovery performance.


Among the existing algorithms that can achieve exact recovery down to the information-theoretic limit in the binary symmetric SBM, the best complexity bound is nearly linear. This is attained by, e.g., the three-stage algorithm of Mossel et al.~\cite{mossel2014consistency} and the spectral partition algorithm of Yun and Proutiere \cite{yun2016optimal}, both of which have an $\mO(n\log^2n)$ runtime, and the two-stage algorithm of Abbe and Sandon~\cite{abbe2015community}, which has a runtime of $o(n^{1+\epsilon})$ for any $\epsilon>0$. It should be pointed out that even though the vanilla spectral method is conceptually much simpler than these algorithms, it needs to perform an eigenvector computation, and standard complexity analyses of the commonly used methods for this purpose (such as orthogonal iteration) only yield a quadratic bound at best (see, e.g.,~\cite[Part V]{trefethen1997numerical}).


\subsection{Our Contributions}
In this work, we propose a two-stage iterative algorithm that aims to achieve exact recovery in the binary symmetric SBM by directly tackling the non-convex ML estimation problem~\eqref{eq:MLE}. In the first stage, the algorithm applies the classic method of orthogonal iteration to compute an approximation of the eigenvector $\bm{u}_2$ associated with the second largest eigenvalue of $\bA$. Such an approximation is then used as an initialization in the second stage of the algorithm, which applies the method of projected power iteration (cf.~\cite{liu2017estimation,liu2017discrete}) to solve Problem~\eqref{eq:MLE}. The first stage is akin to that of a host of existing algorithms, such as those in~\cite{abbe2017entrywise,mossel2014consistency,yun2016optimal}. However, we are able to show that a coarse approximation of $\bm{u}_2$ is sufficient for the second stage of our proposed algorithm to find an optimal solution of Problem~\eqref{eq:MLE}, which is key to the efficiency of our algorithm. Specifically, we show that in the logarithmic sparsity regime of the binary symmetric SBM, our proposed algorithm achieves exact recovery all the way down to the information-theoretic limit within $\mO(\log n/\log\log n)$ orthogonal iterations and $\mO(\log n/\log\log n)$ projected power iterations, where each orthogonal iteration and projected power iteration can be implemented in $\mO(n\log n)$ time. This yields an overall complexity bound of $\mO(n\log^2n/\log\log n)$ for our algorithm, which is competitive with some of the most efficient algorithms in the literature that have the same recovery performance. By combining our techniques with the results in~\cite{abbe2017entrywise}, we can further show that the vanilla spectral method can be implemented in $\mO(n\log^2n/\log\log n)$ time. To the best of our knowledge, this is currently the best complexity bound for the method in the context of exact community recovery in the binary symmetric SBM. We also conduct numerical experiments on synthetic and real data sets to evaluate the performance of our proposed algorithm. The results demonstrate the efficacy of the algorithm and complement our theoretical development.

In recent years, there has been a growing body of literature exploring the design and analysis of fast methods for tackling non-convex formulations that arise in applications. These include deep neural networks~\cite{sun2020global,sun2020optimization}, low-rank matrix recovery~\cite{chi2019nonconvex,li2020nonconvex}, phase retrieval~\cite{ma2020implicit,vaswani2020nonconvex}, source localization~\cite{liu2017local,pun2020dynamic}, and synchronization~\cite{liu2017estimation,zhong2018near}. As these works show, the non-convex formulations in question often possess structures that can be exploited by simple and scalable methods, thereby allowing optimal solutions of those formulations to be found efficiently. Our work contributes to this emerging area by showing that in the logarithmic sparsity regime of the binary symmetric SBM, the ML estimation problem~\eqref{eq:MLE}, albeit non-convex and discrete, can be solved to optimality via a carefully designed, yet simple, iterative procedure. Prior to our work, Bandeira et al. \cite{bandeira2016low} considered another non-convex formulation of the community recovery problem, which is obtained by applying the Burer-Monteiro decomposition~\cite{burer2003nonlinear} to the semidefinite relaxation of Problem \eqref{eq:MLE}. They showed that all second-order stationary points of the non-convex formulation, which can be computed efficiently by the Riemannian trust-region method~\cite{boumal2018global}, correspond to the underlying communities with high probability as long as $(p-q)/\sqrt{p+q}\geq cn^{-1/6}$ for some constant $c>0$. Despite its low computational complexity, the approach requires a much stronger condition on $p$ and $q$ to ensure exact recovery. In particular, it cannot guarantee exact recovery in the logarithmic sparsity regime of the binary symmetric SBM.

Lastly, let us highlight the improvements made in this paper over its preliminary version~\cite{wang2020non}. First, the method in \cite{wang2020non} is designed for a regularized version of Problem \eqref{eq:MLE}. When applying the method to real data sets, which in general are not generated by the SBM, it is difficult to tune the regularization parameter. In the current work, we circumvent this difficulty by handling Problem \eqref{eq:MLE} directly, which makes our proposed method simpler and more practical. Second, compared to its regularized version in \cite{wang2020non}, Problem \eqref{eq:MLE} is more challenging as it contains an additional linear constraint. Nevertheless, we show that a suitably initialized projected power method can solve it efficiently and enjoys the same recovery guarantee as that in \cite{wang2020non}. Third, although both the method in~\cite{wang2020non} and the one proposed in this paper have the property that an iterate will converge in one step to an optimal solution of Problem~\eqref{eq:MLE} once the former is in a suitable neighborhood of the latter, we show in this paper that the size of the neighborhood can be as large as $\mO(\sqrt{\log n})$, which improves upon the $\mO(1)$ bound established in \cite{wang2020non}.

\subsection{Organization}
The rest of this paper is organized as follows. In Section \ref{sec:main-results}, we introduce the proposed two-stage algorithm for exact community recovery and present the main result of this paper. In Section~\ref{sec:pf-main}, we prove the main result and discuss its consequences. We then report some numerical results in Section \ref{sec:num} and conclude in Section~\ref{sec:concl}.

\emph{Notation.}  Let $\R^n$ be the $n$-dimensional Euclidean space. We write matrices in bold capital letters like $\bA$, vectors in bold lower-case letters like $\bm{a}$, and scalars in plain letters. Given a matrix $\bA \in \R^{m\times n}$, we use $\sigma_{\max}(\bA)$ or $\|\bA\|$ to denote its largest singular value (i.e., spectral norm), $\sigma_{\min}(\bA)$ its smallest singular value, and $a_{ij}$ its $(i,j)$-th element. If $\bA$ is symmetric, then we use $\lambda_{\min}(\bA)$ to denote its smallest eigenvalue. Given a vector $\bx\in\mathbb{R}^n$, we use $\|\bx\|_2$ to denote its Euclidean norm, $x_i$ its $i$-th element, and $\diag(\bx)$ the diagonal matrix with $\bx$ on its diagonal. We use $\bo_n$ and $\bE_n$ to denote the $n$-dimensional all-one vector and $n\times n$ all-one matrix, respectively, and simply write $\bo$ and $\bE$ when their dimensions can be inferred from the context. Given a positive integer $n$, we denote by $[n]$ the set $\{1,\ldots,n\}$. Given a discrete set $T$, we denote by $|T|$ the number of elements in $T$. We use $\mbox{sgn}$ to denote the element-wise sign function; i.e., for any $\bx\in\mathbb{R}^n$,
\[
[\mbox{sgn}(\bx)]_i = \left\{
\begin{array}{c@{\quad}l}
1, & \mbox{if } x_i>0, \\
0, & \mbox{if } x_i=0, \\
-1, & \mbox{if } x_i<0,
\end{array}
\right. \quad i \in [n].
\]
We use $\mathbf{Bern}(p)$ to denote the Bernoulli random variable with mean $p$. Given two random variables $X$ and $Y$, we write $X\overset{d}{=}Y$ if $X$ and $Y$ are equal in distribution.

\section{Preliminaries and Main Results}\label{sec:main-results}

In this section, we formally state the considered problem, present the proposed algorithm, and give a summary of our main theoretical results.

To begin, let us introduce a central object in our study---the binary symmetric SBM.

\begin{defn}[Binary Symmetric SBM] \label{model:SBM}
Let $n\ge2$ be an even integer and $p,q\in[0,1]$ be parameters with $p>q$. Furthermore, let $\bx^*\in\{-1,+1\}^n$ be a label vector representing a partition of $[n]$ into two equal-sized subsets (in particular, $\bo^T\bx^*=0$). We say that a random graph $G$ is generated according to the binary symmetric SBM with parameters $(n,p,q)$ and label $\bx^*$ if $G$ has vertex set $V=[n]$ and the elements $\{a_{ij}\}_{1\le i\le j\le n}$ of its adjacency matrix $\bA$ are generated independently by
	\begin{align}
	a_{ij}\sim \left\{
	\begin{aligned}
	\mathbf{Bern}(p),\quad & \text{if}\ \ x_i^*x_j^*=1,\\
	\mathbf{Bern}(q),\quad & \text{if}\ \ x_i^*x_j^*=-1.
	\end{aligned}
	\right. \label{eq:SBM}
	\end{align}
\end{defn}
Intuitively, the label vector $\bx^*$ induces two equal-sized communities in the graph $G$. Note that we allow self-loops in $G$, though our analysis also applies to the case where no self-loop is allowed (i.e., $a_{ii}=0$ for all $i\in[n]$); see Section~\ref{sec:no-loop}.

Now, given a realization of $G$ that is generated according to the binary symmetric SBM, the problem of interest is to recover the two communities. Since $-\bxs$ represents the same community structure as $\bx^*$, this is equivalent to identifying $\bx^*$ or $-\bx^*$ from the adjacency matrix $\bA$. As in~\cite{abbe2016exact}, we say that an estimator achieves \emph{exact recovery}\footnote{This is also termed \emph{strong consistency} in the literature; see~\cite{mossel2014consistency}.} if it yields $\bx^*$ or $-\bx^*$ with probability tending to one as $n\rightarrow\infty$, where the probability is taken with respect to the distribution in~\eqref{eq:SBM}.

In this paper, we focus on the logarithmic sparsity regime of the binary symmetric SBM---i.e.,
\begin{equation}	\label{eq:p-q}
	p = \frac{\alpha\log n}{n} \quad \mbox{and} \quad q = \frac{\beta\log n}{n} 
\end{equation}
for some constants $\alpha>\beta>0$---and propose to solve the community recovery problem by directly handling the non-convex ML estimation problem \eqref{eq:MLE}, even though it is NP-hard in the worst case. The first ingredient in our approach is a simple iterative procedure called the \emph{method of projected power iteration}, which is essentially the projected gradient method applied to Problem~\eqref{eq:MLE}. Specifically, let
\begin{equation}
\label{eq:def-FS}
\mF := \left\{ \bx\in\mathbb{R}^n :  \bo^T\bx = 0, \ x_i = \pm 1, \ i \in [n] \right\}
\end{equation}
denote the feasible set of Problem \eqref{eq:MLE}. Furthermore, let $\mP:\mathbb{R}^n\rightrightarrows\mathbb{R}^n$ be the projection operator onto $\mF$; i.e., for any $\bc\in\mathbb{R}^n$, 
\begin{equation}
\label{eq:def-Proj}
\mP(\bc) := \Argmin_{\bu\in\mathbb{R}^n} \left\{ \|\bu - \bc\|_2^2 : \bu\in\mF\right\}.\footnote{By convention, we use the symbol “$\Argmin$” to denote the solution set of the associated minimization problem. When the solution set is known to be a singleton, we use the symbol “$\argmin$” to denote the unique solution in the set.}
\end{equation}
Then, the projected power iterations take the form
\begin{equation}
\label{eq:PPI}
\bx^{(k)} \in \mP(\bA\bx^{(k-1)}), \quad k=1,2,\dots.
\end{equation} 
As the following proposition shows, for every $k\geq 1$, the problem of computing $\bx^{(k)}$ in \eqref{eq:PPI} boils down to that of finding the indices that correspond to the $n/2$ largest entries of $\bA\bx^{(k-1)}$, which can be done efficiently.
\begin{proposition} \label{prop:cf-proj}
For any $\bc\in\mathbb{R}^n$, it holds that $\bv\in\mP(\bc)$ if and only if 
\begin{equation}
\label{eq:bv-form}
v_\ell = \left\{
\begin{array}{rl}
1, & \quad \ell\in\mcI, \\
-1, & \quad \ell\in[n]\setminus\mcI,
\end{array}\right.
\end{equation}
where $\mcI\subset[n]$ satisfies $|\mcI| = n/2$ and $c_i\geq c_j$ for all $i\in\mcI$ and $j\in[n]\setminus\mcI$. 
\end{proposition}
\begin{proof}
By \eqref{eq:def-Proj}, we have 
\[ 
\mP(\bc) = \Argmin_{\bu\in\mathbb{R}^n} \left\{ \|\bu - \bc\|_2^2 : \bu\in\mF\right\} = \Argmax_{\bu\in\mathbb{R}^n} \left\{ \bc^T\bu : \bu\in\mF\right\}, 
\] 
where the second equality is due to the fact that $\|\bu\|_2^2 = n$ for all $\bu\in\mF$. The desired formula~\eqref{eq:bv-form} follows immediately.
\end{proof}

\smallskip
Despite its simplicity, the method of projected power iteration may not be effective for solving Problem \eqref{eq:MLE} unless a proper initial point $\bx^{(0)}$ is available. Thus, we need an additional iterative procedure, which is usually referred to as the \emph{method of orthogonal iteration} (see, e.g., \cite{golub2012matrix}) and constitutes the second ingredient in our approach, to obtain a good initial point for the projected power iterations. The method of orthogonal iteration starts with a matrix $\bQ^{(0)}\in\mathbb{R}^{n\times 2}$ with orthonormal columns. 
In iteration $k\ge1$, it computes the QR decomposition of $\bA\bQ^{(k-1)}$; i.e.,
\[   \bA\bQ^{(k-1)} = {\bQ}^{(k)}\bR^{(k)},\]
where ${\bQ}^{(k)} \in \mathbb{R}^{n\times 2}$ has orthonormal columns and $\bR^{(k)}\in\R^{2\times 2}$ is upper triangular. It is known that the distance between the subspace spanned by the columns of $\bQ^{(k)}$ and the invariant subspace of $\bA$ that corresponds to its first two dominant eigenvalues converges linearly to $0$ as $k\rightarrow\infty$; see, e.g., \cite[Theorem 8.2.2]{golub2012matrix}. For our purpose, we only perform $N$ orthogonal iterations, where $N$ is an input parameter of the algorithm. Then, we apply Ritz acceleration (see, e.g., \cite[Chapter~8.3.7]{golub2012matrix}) to the last iterate, which amounts to computing the eigenvalue decomposition of $\bQ^{(N)^T}\bA{\bQ}^{(N)}$; i.e.,
\[ \bQ^{(N)^T}\bA{\bQ}^{(N)} = \bH^{(N)}\bD^{(N)}\bH^{(N)^T}, \]
where $\bH^{(N)}\in\mathbb{R}^{2\times 2}$ is orthogonal and $\bD^{(N)} = \mbox{diag}(d_1^{(N)},d_2^{(N)})$ is diagonal with $|d_1^{(N)}| \geq |d_2^{(N)}|$. Finally, we extract the column of $\bar{\bQ}^{(N)}={\bQ}^{(N)}\bH^{(N)}$ that corresponds to the smaller eigenvalue of $\bQ^{(N)^T}\bA{\bQ}^{(N)}$ to construct a suitable initial point for the projected power iterations.


We now summarize our proposed method for solving Problem~\eqref{eq:MLE} in Algorithm \ref{alg:TSA}. It starts with a matrix $\bY\in\mathbb{R}^{n\times 2}$, whose entries are generated independently and identically from the standard normal distribution. In the first stage (lines \ref{line:oi-start}--\ref{line:oi-end} of Algorithm \ref{alg:TSA}), the algorithm performs $N$ orthogonal iterations with the initial iterate $\bQ^{(0)}$, which is obtained by orthonormalizing the columns of $\bY$ via $\bQ^{(0)} = \bY(\bY^T\bY)^{-1/2}$, and then applies the Ritz acceleration. In the second stage (lines \ref{line:ppi-start}--\ref{line:ppi-end} of Algorithm~\ref{alg:TSA}), the algorithm employs projected power iterations to refine the initial iterate $\bx^{(0)}$, which is constructed from a suitable column of $\bar{\bQ}^{(N)}$. The algorithm terminates when $\bx^{(k)} = \bx^{{(k-1)}}$ for some $k$ in the second stage, at which point it outputs $\bx^{(k)}$.

\begin{algorithm}[t]
\DontPrintSemicolon
\SetKwInput{KwInput}{Input}               
\SetKwInput{KwOutput}{Output} 
\KwInput{adjacency matrix $\bA$, positive integer $N$}
\KwOutput{label vector $\hat{\bx}$}       
choose a matrix $\bY \in \R^{n\times 2}$, whose entries are generated independently and identically from the standard normal distribution \\
\tcc{stage 1: method of orthogonal iteration with Ritz acceleration}
set $\bQ^{(0)} \leftarrow \bY(\bY^T\bY)^{-1/2}$ \label{line:oi-start} \\
\For{$k=1,2,\dots,N$}{
set $\bm{Z}^{(k)}\leftarrow \bA\bQ^{(k-1)}$ \\
 compute the QR decomposition $\bm{Z}^{(k)} = {\bQ}^{(k)}\bR^{(k)}$ \label{line:qr} \\
}
set $\bm{S}^{(N)}\leftarrow \bQ^{{(N)}^T} \bA {\bQ}^{(N)}$ \\
compute the eigen-decomposition $\bm{S}^{(N)} = \bH^{(N)}\bD^{(N)}{\bH^{(N)^T}}$, where $\bD^{(N)} = \mbox{diag}(d_1^{(N)},d_2^{(N)})$ such that $|d_1^{(N)}| \geq |d_2^{(N)}|$ \label{line:eigen} \\ \smallskip
set $\bar{\bQ}^{(N)}\leftarrow {\bQ}^{(N)}\bH^{(N)}$ \label{line:oi-end}\\ 
set $\tilde{\by}$ to be the $i^*$-th column of $\bar{\bQ}^{(N)}$, where $i^*=\argmin_{i\in\{1,2\}} d_i^{(N)}$ \label{line:evec-1} \\
set $\by\leftarrow\tilde{\by} - (\bo^T\tilde{\by}/n)\bo$ \label{line:evec-2}  \\
\tcc{stage 2: method of projected power iteration}
set $\bx^{(0)} \leftarrow \sqrt{n}\by/\|\by\|_2$ \label{line:ppi-start} \\
\For{$k=1,2,\dots$}{
set $\bz^{(k)}\leftarrow \bA\bx^{(k-1)}$ \\
set $\bx^{(k)} \leftarrow \mP(\bz^{(k)})$ \label{line:proj0} \\
\If{$\bx^{(k)}=\bx^{(k-1)}$}{
terminate and output $\hat{\bx} = \bx^{(k)}$
} 
}\label{line:ppi-end}
\caption{A Two-Stage Algorithm for Solving Problem \eqref{eq:MLE}}
\label{alg:TSA}
\end{algorithm}

We next present the main result of this paper, which shows that Algorithm~\ref{alg:TSA} achieves exact recovery at the information-theoretic limit and also provides explicit iteration complexity bounds for Algorithm \ref{alg:TSA}.

\begin{theorem}\label{thm:main}
Let $\bA$ be the adjacency matrix of a random graph generated according to the binary symmetric SBM with parameters $(n,p,q)$ and label $\bx^*$, where $p,q$ satisfy~\eqref{eq:p-q} for some constants $\alpha>\beta>0$. Set $N=\Theta(\log n/\log\log n)$. If $\sqrt{\alpha} - \sqrt{\beta} > \sqrt{2}$, then for all sufficiently large $n$, the following statement holds with probability at least $1-n^{-\Omega(1)}$: Algorithm \ref{alg:TSA} takes $\mO(\log n/\log\log n)$ orthogonal iterations and $\mO(\log n/\log\log n)$ projected power iterations to output $\bx^*$ or $-\bxs$. Here, the probability is taken with respect to the random choices in $\bA$ and in Algorithm~\ref{alg:TSA}.
\end{theorem}

We remark that the value of $N$ in Theorem \ref{thm:main} can be explicitly given; see~\eqref{eq:N1}. Equipped with Theorem \ref{thm:main}, it is not hard to derive the total computational cost of Algorithm \ref{alg:TSA}. Indeed, since $\bm{Z}^{(k)} \in \mathbb{R}^{n\times 2}$ and $\bm{S}^{(N)} \in \mathbb{R}^{2\times 2}$, the QR decomposition in line~\ref{line:qr} can be found in $\mO(n)$ time, while the eigen-decomposition in line~\ref{line:eigen} can be found in $\mO(1)$ time~\cite{trefethen1997numerical}. Moreover, by Proposition \ref{prop:cf-proj}, the projection $\mP(\bz^{(k)})$ in line~\ref{line:proj0} can be found by first identifying the $(n/2)$-th largest element $\bar{z}^{(k)}$ of $\bm{z}^{(k)}$, which can be done in $\mathcal{O}(n)$ time~\cite{blum1973time}, and then comparing each element of $\bm{z}^{(k)}$ with $\bar{z}^{(k)}$, which can be trivially done in $\mathcal{O}(n) $ time. Now, the remaining dominant computational cost is that of computing matrix-vector products of the form $\bA\bv$, where $\bA\in\mathbb{R}^{n\times n}$ is generated according to the setting of Theorem~\ref{thm:main} and $\bv\in\mathbb{R}^n$ is arbitrary. Using a simple concentration argument, one can show that the number of non-zero entries in $\bA$ is $\mO(n\log n)$ with high probability; see Section~\ref{sec:pf-main}. Hence, with high probability, the cost of computing $\bA\bv$ is $\mO(n\log n)$ for any $\bv$. Putting the above time bounds together and using the iteration bounds established in Theorem~\ref{thm:main}, we obtain the following corollary.
\begin{corollary}\label{coro:tt-cpu-cost}
Consider the setting of Theorem~\ref{thm:main}. If $\sqrt{\alpha} - \sqrt{\beta}>\sqrt{2}$, then for all sufficiently large $n$, the probability that Algorithm~\ref{alg:TSA} outputs $\bxs$ or $-\bxs$ in $\mathcal{O}(n\log^2 n/\log\log n)$ time is at least $1 - n^{-\Omega(1)}$. 
\end{corollary}

To put the above results in perspective, let us make the following remarks:
\begin{itemize}
\item[(a)] While Problem \eqref{eq:MLE} is known to be NP-hard in the worst case, the assumption that the adjacency matrix $\bA$ arises from the binary symmetric SBM in Definition~\ref{model:SBM} allows us to conduct an average-case analysis of Algorithm \ref{alg:TSA}. In particular, if the constants $\alpha,\beta>0$ satisfy $\sqrt{\alpha} - \sqrt{\beta} > \sqrt{2}$, which is known to be the information-theoretic limit for exact recovery~\cite{abbe2016exact,mossel2014consistency}, then with high probability, $\bxs$ and $-\bxs$ are the only optimal solutions of Problem \eqref{eq:MLE}~\cite{abbe2016exact}. Moreover, Corollary~\ref{coro:tt-cpu-cost} shows that with high probability, Algorithm~\ref{alg:TSA} computes an optimal solution of Problem~\eqref{eq:MLE} in nearly-linear time. As such, Algorithm \ref{alg:TSA} is more efficient than SDP-based methods (see, e.g., \cite{bandeira2018random,hajek2016achieving}). The time bound for Algorithm \ref{alg:TSA} is also competitive with those for some of the most efficient methods in the literature (see, e.g., \cite{abbe2015community,gao2017achieving,mossel2014consistency,yun2016optimal}) under the setting of Theorem \ref{thm:main}.

\item[(b)] In the recent work~\cite{abbe2017entrywise}, Abbe et al. showed that the vanilla spectral method, which first computes an exact eigenvector $\bm{u}_2$ associated with the second-largest eigenvalue of $\bA$ and then returns $\widehat{\bm x}={\rm sgn}(\bm{u}_2)$ as the label vector, achieves exact recovery under the setting of Theorem \ref{thm:main}. Conceptually, the method can be implemented in the framework of Algorithm~\ref{alg:TSA} as follows. First, by performing $N\rightarrow\infty$ orthogonal iterations, we obtain a limit point $\bar{\bQ}^{(\infty)}$, which can be used to construct $\bm{u}_2$; see \cite[Theorem 8.2.2]{golub2012matrix} and compare with lines~\ref{line:evec-1}--\ref{line:evec-2} of Algorithm~\ref{alg:TSA}. Then, we return the label $\widehat{\bm x}={\rm sgn}(\bm{u}_2)$. Incidentally, observe that if the label $\widehat{\bm x}$ coincides (up to sign) with the ground-truth label $\bxs$, then $\bm{u}_2$ has exactly $n/2$ positive entries and $n/2$ negative entries. This, together with Proposition \ref{prop:cf-proj}, implies that $\widehat{\bm x}$ can also be computed by projecting $\bu_2$ onto $\mF$.

In actual implementation, however, we need to know when to terminate the orthogonal iterations, so that the vanilla spectral method can proceed to the sign-taking step. Unfortunately, the results in~\cite{abbe2017entrywise} do not provide the required termination criterion. By contrast, Theorem \ref{alg:TSA} shows that the underlying communities can be exactly recovered by first performing $\mO(\log n/\log\log n)$ orthogonal iterations to obtain a coarse approximation $\bm{y}$ of $\bm{u}_2$ and then applying $\mO(\log n/\log\log n)$ projected power iterations to a suitably scaled $\bm{y}$. As it turns out, by combining our results in Sections~\ref{sec:oim} and~\ref{sec:ppm} with the arguments in~\cite{abbe2017entrywise}, we can show that the vanilla spectral method only needs $\mO(\log n/\log\log n)$ orthogonal iterations before the sign-taking step in order to exactly recover the underlying communities with high probability; see Section~\ref{sec:spectral-time}. It is worth noting that this gives the best complexity bound known to date for the vanilla spectral method in the context of exact community recovery in the binary symmetric SBM. This further demonstrates the power of our approach.
%
%
\end{itemize}

\section{Proof of the Main Result} \label{sec:pf-main}
In this section, we prove our main result (i.e., Theorem~\ref{thm:main}) concerning the recovery performance and iteration complexity of Algorithm \ref{alg:TSA}. This involves establishing some key properties of the orthogonal iterations and projected power iterations, which will be  accomplished in Sections~\ref{sec:oim} and~\ref{sec:ppm}, respectively.

\subsection{Analysis of the Method of Orthogonal Iteration}\label{sec:oim}
Our main goal in this sub-section is to provide a probabilistic analysis of the convergence behavior of the orthogonal iterations deployed in the first stage of Algorithm~\ref{alg:TSA}. Such an analysis is not only useful for proving Theorem \ref{thm:main} but may also be of independent interest. To proceed, let us introduce some further notation that will be used in the sequel. Let $\bA$ be as in Theorem~\ref{thm:main} and consider its eigenvalue decomposition $\bA = \bU\bLbd\bU^T$, where $\bLbd = \diag(\lambda_1,\dots,\lambda_n)$ with $\lambda_1\geq \dots\geq \lambda_n$ and $\bU = [\bu_1,\dots,\bu_n]$. We write
\begin{equation}\label{eq:a-b}
\begin{aligned}
\bLbd_\alpha & := \diag(\lambda_1,\lambda_2), & \bLbd_\beta & := \diag(\lambda_3,\dots,\lambda_n), \\
\bU_\alpha & := [\bu_1,\bu_2], & \bU_\beta & := [\bu_3,\dots,\bu_n].
\end{aligned}
\end{equation} 
Moreover, we define,
for every $k\geq 0$,  
\begin{equation}
\label{eq:PVW}
\bP^{(k)} := \bU^T\bQ^{(k)}, \quad \bV^{(k)} := \bU_{\alpha}^T\bQ^{(k)}, \quad \bW^{(k)} := \bU_{\beta}^T\bQ^{(k)},
\end{equation} 
where the sequence $\{\bQ^{(k)}\}_{k\geq 1}$ is generated by Algorithm \ref{alg:TSA}. Note that for all $k\geq0$, $\bP^{(k)}\in\R^{n\times 2}$, $\bV^{(k)}\in\R^{2\times 2}$, $\bW^{(k)}\in\R^{(n-2)\times 2}$, and $\bP^{(k)}$ has orthonormal columns. Besides, by the CS decomposition (see, e.g.,~\cite[Theorem 2.5.2]{golub2012matrix}), we have
\begin{equation}
\label{eq:min+max}
\sigma_{\min}^2(\bV^{(k)}) + \sigma_{\max}^2(\bW^{(k)}) = 1,\quad k=0,1,\dots.
\end{equation}

It is known that the quantity $\sqrt{1-\sigma_{\min}^2(\bV^{(0)})}$ measures the distance between the subspaces spanned by the columns of $\bU_\alpha$ and $\bQ^{(0)}$; see, e.g., \cite[Theorem 2.5.1]{golub2012matrix}. To bound this distance, we prove the following result.
\begin{lemma}\label{lem:ortho-init}
For all $n \ge 6$, it holds with probability at least $1 - 4\sqrt{\log n/n}$ that
\begin{equation}
\label{dist:ortho-init}
\sigma_{\min}(\bV^{(0)}) \ge \frac{1}{\sqrt{n^2+1}}.
\end{equation}
\end{lemma}
\begin{proof}
Recall from Algorithm \ref{alg:TSA} that $\bQ^{(0)} = \bY(\bY^T\bY)^{-1/2}$, where $\bY \in \R^{n\times 2}$ is a random matrix whose entries are i.i.d.~standard normal random variables. By the definition of $\bU_\alpha$, we have $\bU_\alpha = \bU\bE$, where $\bE := (\bee_1,\bee_2)$. This, together with $\bV^{(0)} = \bU_{\alpha}^T\bQ^{(0)}$, $\bQ^{(0)} = \bY(\bY^T\bY)^{-1/2}$, $\bU\bU^T = \bI$, and the orthogonal invariance of the normal distribution, yields
\begin{align*}
\bV^{(0)} = \bE^T(\bU^T\bY)\left( (\bU^T\bY)^T(\bU^T\bY) \right)^{-1/2} \overset{d}{=} \bE^T\bY ( \bY^T\bY)^{-1/2} = \bE^T\bQ^{(0)}.
\end{align*}
It then follows that
\begin{align}
& \P\left( \sigma_{\min}\left(\bV^{(0)}\right) \le a \right) = \P\left( \sigma_{\min}\left(\bE^T\bQ^{(0)}\right) \leq a \right) \nonumber \\ & \qquad = \P\left( \lambda_{\min}\left({\bQ^{(0)}}^T\bE\bE^T\bQ^{(0)}\right) \leq a^2 \right)  
= \int_0^{a^2} p(x)dx, \label{eq:integral}
\end{align}  
where $p(\cdot)$ is the probability density function of $\lambda_{\min}\left({\bQ^{(0)}}^T\bE\bE^T\bQ^{(0)}\right)$. By~\cite[eq.~(6)]{absil2006largest}, $p(\cdot)$ takes the form
\begin{equation}
\label{eq:pdf}
p(x) = \frac{(n-2)(n-3)}{4}\cdot\frac{(1-x)^{n-3}}{x}\int_0^1y\left(1 + \frac{1-x}{x}y\right)^{-\frac{1}{2}}(1-y)^{\frac{n-5}{2}}dy
\end{equation}
for any $x\in(0,1]$.
This allows us to derive an upper bound on the integral in \eqref{eq:integral}. Indeed, using the Cauchy-Schwarz inequality, we get
\begin{align*}
& \int_0^1y\left(1 + \frac{1-x}{x}y\right)^{-\frac{1}{2}}(1-y)^{\frac{n-5}{2}}dy \\ & \quad \leq \left(\int_0^1\left(1 + \frac{1-x}{x}y\right)^{-1}dy\right)^{\frac{1}{2}}\cdot \left(\int_0^1 y^2(1-y)^{n-5}dy\right)^{\frac{1}{2}} \\
& \quad = \left( \frac{x}{1-x}\cdot\log\left(\frac{1}{x}\right)\right)^\frac{1}{2}\cdot \left(\frac{2}{(n-2)(n-3)(n-4)}\right)^{\frac{1}{2}}.
\end{align*} 
Together with \eqref{eq:pdf}, this implies that for all $n\geq 6$, 
\begin{align}
\int_0^{a^2} p(x)dx & \leq \frac{\sqrt{2}}{4}\cdot\frac{\sqrt{(n-2)(n-3)}}{\sqrt{n-4}}\cdot\int_0^{a^2}\sqrt{\frac{1}{x}\log\left(\frac{1}{x}\right)}(1-x)^{n-\frac{7}{2}}dx \nonumber \\
& \leq \frac{\sqrt{2n}}{4}\cdot\int_0^{a^2}\sqrt{\frac{1}{x}\log\left(\frac{1}{x}\right)}dx, \label{eq:int--1}
\end{align}
where the second inequality follows from the fact that for all $n\geq 6$, we have $(n-2)(n-3)\leq n(n-4)$ and $0\leq (1-x)^{n-7/2}\leq 1$ for every $0\leq x\leq 1$. By letting $x = e^{-2z^2}$, we obtain
\begin{align}
\int_0^{a^2}\sqrt{\frac{1}{x}\log\left(\frac{1}{x}\right)}dx = 4\sqrt{2}\int_{\sqrt{\log\left(\frac{1}{a}\right)}}^\infty z^2e^{-z^2}dz. \label{eq:int-0}
\end{align} 
Moreover, using integration by parts, we compute
\begin{equation}\label{eq:int-1}
\int_{\sqrt{\log\left(\frac{1}{a}\right)}}^\infty z^2e^{-z^2}dz = \frac{a}{2}\sqrt{\log\left(\frac{1}{a}\right)} + \frac{1}{2}\int_{\sqrt{\log\left(\frac{1}{a}\right)}}^\infty e^{-z^2}dz.
\end{equation}
Notice that for any $a \leq 1/e$, we have $\sqrt{\log(1/a)} \geq 1$, which leads to 
\begin{equation}\label{eq:int-2}
\int_{\sqrt{\log\left(\frac{1}{a}\right)}}^\infty e^{-z^2}dz \leq \int_{\sqrt{\log\left(\frac{1}{a}\right)}}^\infty z^2 e^{-z^2}dz. 
\end{equation}
Combining \eqref{eq:int-1} and \eqref{eq:int-2}, we have that for any $a\leq 1/e$, 
$$ \int_{\sqrt{\log\left(\frac{1}{a}\right)}}^\infty z^2e^{-z^2}dz \leq a\sqrt{\log\left(\frac{1}{a}\right)}. $$
This, together with \eqref{eq:integral}, \eqref{eq:int--1}, and \eqref{eq:int-0}, yields
$$ \P\left( \sigma_{\min}\left(\bV^{(0)}\right) \le a \right) \leq 2\sqrt{n}a\sqrt{\log\left(\frac{1}{a}\right)} $$
for all $n\geq 6$ and $a\leq 1/e$. By letting $a = 1/\sqrt{n^2+1}$ (which satisfies $a\leq 1/e$ for any $n\geq 6$) in the above inequality, we obtain 
$$ \P\left( \sigma_{\min}\left(\bV^{(0)}\right) \leq \frac{1}{\sqrt{n^2+1}} \right) \leq \frac{2\sqrt{n}}{\sqrt{n^2+1}}\sqrt{\log\left(\sqrt{n^2+1}\right)} \le 4\sqrt{\frac{\log n}{n}}, $$
which implies Lemma \ref{lem:ortho-init} as desired.
\end{proof}

\smallskip
Next, we present a spectral bound on the deviation of $\bA$ from its mean. It is a direct consequence of \cite[Theorem 5.2]{lei2015consistency} and thus we omit its proof. 
\begin{lemma}\label{lem:A-2norm}
There exist constants $c_1\ge 1$ and $c_2>0$, whose values depend only on $\alpha$ and $\beta$, such that
\begin{equation}
\label{eq:A-2norm}
\|\bA - \E[\bA]\| \leq c_1\sqrt{\log n}
\end{equation}
holds with probability at least $1 - c_2n^{-3}$.
\end{lemma}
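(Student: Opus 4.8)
The plan is to obtain \eqref{eq:A-2norm} directly from the general spectral concentration bound for adjacency matrices of random graphs with independent edges, as recorded in \citet[Theorem 5.2]{lei2015consistency}; indeed the paper's own remark is that the lemma ``follows directly'' from that result, so the task is to verify its hypotheses and track the constants. First I would set $\bH := \bA - \bE[\bA]$ and note that under Model~\ref{model:SBM} the matrix $\bH$ is symmetric with zero diagonal, and its strictly upper-triangular entries $\{h_{ij} : 1 \le i < j \le n\}$ are mutually independent, each a centered Bernoulli variable with $|h_{ij}| \le 1$ and $\bE[h_{ij}^2] = p_{ij}(1 - p_{ij}) \le p$, where $p_{ij} = p$ if $x_i^* x_j^* = 1$ and $p_{ij} = q$ otherwise. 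Thus Model~\ref{model:SBM} is a special case of the inhomogeneous edge-independent model covered by \citet[Theorem 5.2]{lei2015consistency}, with the maximum-expected-degree parameter $d := n \max_{i,j} p_{ij} = n p = \alpha \log n$.

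Next I would check the only hypothesis needed, namely $d \ge c_0 \log n$ for some constant $c_0 > 0$: this holds with $c_0 = \alpha$, since $d = \alpha \log n$. The cited theorem then guarantees that for every $r > 0$ there is a constant $C$, depending only on $r$ and $c_0$ (hence only on $\alpha$), such that
\[
\|\bA - \bE[\bA]\|_2 = \|\bH\|_2 \le C \sqrt{d} = C\sqrt{\alpha}\,\sqrt{\log n}
\]
with probability at least $1 - c_2 n^{-r}$ for an absolute constant $c_2 > 0$. Taking $r = 3$ and putting $c_1 := \max\{1,\, C\sqrt{\alpha}\}$ gives precisely \eqref{eq:A-2norm} with probability at least $1 - c_2 n^{-3}$; by construction $c_1 \ge 1$, and both $c_1$ and $c_2$ depend only on $\alpha$ (a fortiori only on $\alpha$ and $\beta$), as required.

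The only part that is genuinely nontrivial is the cited bound itself, and I expect that to be the main obstacle if a self-contained proof were wanted: in this $\Theta(\log n)$-degree regime a crude $\varepsilon$-net estimate on $\sup_{\|\bu\| = \|\bv\| = 1} \bu^T \bH \bv$ combined with Bernstein's inequality loses an extra $\sqrt{\log n}$ factor, because the rows of $\bH$ carry $\Theta(\log n)$ nonzeros whose $\ell_2$-norms cannot be controlled uniformly enough over the net. The standard fix --- and what underlies \citet[Theorem 5.2]{lei2015consistency} --- is a Feige--Ofek-type splitting of the bilinear form over the net into ``light pair'' and ``heavy pair'' contributions: the light pairs are handled by a Bernstein-type tail bound, while the heavy pairs are controlled by a combinatorial discrepancy/counting argument on the random graph. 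Since Lemma~\ref{lem:A-2norm} only enters through the rank-one perturbation $\bB = \bA - \rho \bm{E}_n$ in later sections, it is enough to invoke this result as a black box rather than reproduce its proof.
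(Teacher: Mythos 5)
Your proposal is correct and takes exactly the same route as the paper, which omits the proof entirely and simply invokes \citet[Theorem 5.2]{lei2015consistency}; your verification of its hypotheses (independent edges, $d = np = \alpha\log n \ge c_0\log n$) and the bookkeeping $c_1 = \max\{1, C\sqrt{\alpha}\}$ with $r=3$ is the intended argument. The only quibble is that under Model~\ref{model:SBM} the diagonal entries $a_{ii}\sim\mathbf{Bern}(p)$ are random rather than zero, but the centered diagonal contributes at most $1$ to the operator norm and is absorbed into $c_1\sqrt{\log n}$.
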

Based on Lemma \ref{lem:A-2norm}, we can establish the following corollary, which provides estimates on the eigenvalues and eigenvectors of $\bA$.
\begin{corollary}\label{coro:eig-bound}
With probability at least $1 - c_2n^{-3}$, the following statements hold:
\begin{align}
\begin{split}\label{eq:lambda1}
& \frac{\alpha+\beta}{2}\log n - c_1\sqrt{\log n} \le \lambda_1 \le \frac{\alpha+\beta}{2}\log n + c_1\sqrt{\log n},
\end{split}\\
\begin{split}\label{eq:lambda2}
& \frac{\alpha-\beta}{2}\log n - c_1\sqrt{\log n}\le \lambda_2 \le \frac{\alpha-\beta}{2}\log n + c_1\sqrt{\log n},
\end{split} \\
\begin{split}\label{eq:lambdai}
& |\lambda_i| \le c_1\sqrt{\log n}, \quad i=3,\dots,n,
\end{split} \\
\begin{split}\label{eq:u1-pert}
& \min_{ \theta \in \{\pm 1\} } \left \| \theta\bu_1 - \frac{\bo}{\sqrt{n}} \right\|_2 \le \frac{c_3}{\sqrt{\log n}},
\end{split} \\
\begin{split}\label{eq:u2-pert}
& \min_{ \theta \in \{\pm 1\} } \left \| \theta\bu_2 - \frac{\bxs}{\sqrt{n}} \right\|_2 \le \frac{c_3}{\sqrt{\log n}},
\end{split}
\end{align}
where $c_1$, $c_2$ are the constants in Lemma \ref{lem:A-2norm} and $c_3 := 2\sqrt{2}c_1/\min\{\beta,(\alpha-\beta)/2\}$.
\end{corollary}
\begin{proof}
Suppose that the statement in Lemma \ref{lem:A-2norm} holds, which happens with probability at least $1 - c_2n^{-3}$. Let $\nu_1\geq \nu_2 \geq \dots \geq \nu_n$ be the eigenvalues of $\E[\bA]$. It follows from Weyl's inequality (see, e.g.,~\cite[Theorem 4.5.3]{vershynin2018high}) that
\begin{equation}
\label{eq:weyl}
|\lambda_i - \nu_i| \leq \|\bA - \E[\bA]\|, \quad i=1,2,\dots,n.
\end{equation}
According to the binary symmetric SBM in Definition~\ref{model:SBM}, we have
\begin{equation}
\label{eq:lam-exp}
\E[\bA] = \frac{p+q}{2}\bo\bo^T + \frac{p-q}{2}{\bx^*}{\bx^*}^T.
\end{equation}
Since $\bo^T\bxs = 0$ and $\|\bo\|_2^2=\|\bxs\|_2^2=n$, we see that $\E[\bA]$ is a rank-2 matrix with $\bo$ and $\bxs$ being the eigenvectors associated with the largest and second-largest eigenvalues, respectively. Using \eqref{eq:p-q}, we can compute
\begin{equation}\label{eq:nu-eig-EA}
\nu_1 = \frac{\alpha+\beta}{2}\log n, \quad \nu_2 = \frac{\alpha-\beta}{2}\log n, \quad \nu_i = 0, \quad i=3,\dots,n.
\end{equation}
By \eqref{eq:weyl}, \eqref{eq:nu-eig-EA}, and Lemma \ref{lem:A-2norm}, the desired results \eqref{eq:lambda1}--\eqref{eq:lambdai} are immediate.
%
Moreover, it follows from \eqref{eq:nu-eig-EA} that
$$ \delta_1:=\min_{i\neq 1}|\nu_1 - \nu_i| = \beta\log n, \quad \delta_2:=\min_{i\neq 2}|\nu_2 - \nu_i| =  \min\left\{\beta, \frac{\alpha-\beta}{2}\right\}\log n.$$
This, together with the Davis-Kahan theorem (see, e.g., \cite[Theorem 4.5.5]{vershynin2018high}) and Lemma \ref{lem:A-2norm}, yields 
$$ \min_{\theta\in\{\pm 1\}} \left\|\theta\bu_1 - \frac{\bo}{\sqrt{n}}\right\|_2 \leq \frac{2\sqrt{2}\|\bA - \mathbb{E}[\bA]\|}{\delta_1} \leq \frac{2\sqrt{2}c_1}{\beta\sqrt{\log n}}\leq \frac{c_3}{\sqrt{\log n}}$$
and
$$ \min_{\theta\in\{\pm 1\}} \left\|\theta\bu_2 - \frac{\bxs}{\sqrt{n}}\right\|_2 \leq \frac{2\sqrt{2}\|\bA - \mathbb{E}[\bA]\|}{\delta_2} \leq \frac{2\sqrt{2}c_1}{\min\left\{ \beta, \frac{\alpha-\beta}{2} \right\} \sqrt{\log n}} = \frac{c_3}{\sqrt{\log n}}. $$
The proof is then completed.
\end{proof}

\smallskip
Now, we are ready to analyze the convergence of the orthogonal iterations. From Algorithm~\ref{alg:TSA}, one can verify by induction that for every $k\geq 1$, 
\begin{equation}
\label{eq:OI-k}
\bA^k\bQ^{(0)} = \bQ^{(k)}\tbR^{(k)},
\end{equation}
where $\tbR^{(k)}:=\bR^{(k)}\bR^{(k-1)}\cdots \bR^{(1)}$ with $\tbR^{(1)} = \bR^{(1)}$. 
This, together with \eqref{eq:a-b}--\eqref{eq:PVW} and $\bA = \bU\bLbd\bU^T$, yields
\begin{align}
\bLbd_\alpha^k\bV^{(0)} = \bV^{(k)}\tbR^{(k)}, \quad 
\bLbd_\beta^k\bW^{(0)} = \bW^{(k)}\tbR^{(k)}. \label{eq:V-W}
\end{align}
Suppose that \eqref{dist:ortho-init} and $\lambda_1\geq\lambda_2>0$ hold. Then, $\bLbd_\alpha^k\bV^{(0)}$ is non-singular, which implies that the square matrix $\bV^{(k)}$ is invertible and $\tbR^{(k)} = (\bV^{(k)})^{-1}\bLbd_\alpha^k\bV^{(0)}$. Together with \eqref{eq:V-W}, this leads to
\begin{equation}
\label{eq:K-rate}
\bK^{(k)} = \bLbd^k_\beta\bK^{(0)}\bLbd^{-k}_\alpha,
\end{equation}
where we define $\bK^{(k)}:= \bW^{(k)}(\bV^{(k)})^{-1}$ for all $k\geq 0$. Let $\bk^{(k)}_1$ and $\bk^{(k)}_2$ be the first and second column of $\bK^{(k)}$, respectively. The following result characterizes the convergence rates of $\bk^{(k)}_1$ and $\bk^{(k)}_2$.
\begin{proposition}\label{prop:K-rate}
Suppose that $n\geq\exp(16c_1^2/(\alpha-\beta)^2)$ and that \eqref{dist:ortho-init}, \eqref{eq:lambda1}--\eqref{eq:lambdai}, and $\lambda_2>0$ hold. Then, for every $k\geq 0$, it holds that
\begin{align}\label{eq:k-rate-n}
\|\bk_1^{(k)}\|_2 \leq n\left(\frac{4c_1}{(\alpha+\beta)\sqrt{\log n}}\right)^k \quad \mbox{and} \quad 
\|\bk_2^{(k)}\|_2 \leq n\left(\frac{4c_1}{(\alpha-\beta)\sqrt{\log n}}\right)^k. 
\end{align}
\end{proposition}
\begin{proof}
It follows from \eqref{eq:K-rate} and $\lambda_1\geq \lambda_2 >0$ that
\begin{equation}
\label{eq:k-rate}
\|\bk_1^{(k)}\|_2 \leq \left(\frac{\bar{\lambda}}{\lambda_1}\right)^k\|\bK^{(0)}\|, \quad \|\bk_2^{(k)}\|_2 \leq \left(\frac{\bar{\lambda}}{\lambda_2}\right)^k\|\bK^{(0)}\|,
\end{equation}
where $\bar{\lambda} = \max\{|\lambda_3|,\dots,|\lambda_n|\}$. By \eqref{eq:lambdai}, we have $\bar{\lambda} \leq c_1\sqrt{\log n}$. Moreover, 
$$ \|\bK^{(0)}\| = \|\bW^{(0)}(\bV^{(0)})^{-1}\| 
\leq  \frac{\sigma_{\max}(\bW^{(0)})}{\sigma_{\min}(\bV^{(0)})} = \sqrt{\frac{1}{\sigma^2_{\min}(\bV^{(0)})}-1} \leq n, $$
where the second equality follows from \eqref{eq:min+max} and the last inequality is due to \eqref{dist:ortho-init}. These, together with \eqref{eq:lambda1} and \eqref{eq:lambda2}, yield
\begin{align}
\|\bk_1^{(k)}\|_2 & \leq n\left(\frac{2c_1\sqrt{\log n}}{(\alpha+\beta)\log n - 2c_1\sqrt{\log n}}\right)^k, \nonumber \\
\|\bk_2^{(k)}\|_2 & \leq n\left(\frac{2c_1\sqrt{\log n}}{(\alpha-\beta)\log n - 2c_1\sqrt{\log n}}\right)^k. \nonumber
\end{align}
In particular, the desired result \eqref{eq:k-rate-n} holds for all $n$ satisfying $4c_1\sqrt{\log n}\leq (\alpha-\beta)\log n$, or equivalently, $n\geq\exp(16c_1^2/(\alpha-\beta)^2)$.
\end{proof}

\smallskip
Next, we study the effect of Ritz acceleration. Let $\{\bar{\bQ}^{(k)}\}_{k\ge1}^{N-1}$ be an auxiliary sequence constructed via $\bar{\bQ}^{(k)}=\bQ^{(k)}\bm{H}^{(k)}$, where $\bm{H}^{(k)}$ is obtained from the eigen-decomposition of $\bQ^{(k)^T} \bA {\bQ}^{(k)}$---i.e., $\bQ^{(k)^T} \bA {\bQ}^{(k)} = \bH^{(k)}\bD^{(k)}{\bH^{(k)}}^T$ with $\bD^{(k)} = \mbox{diag}(d_1^{(k)},d_2^{(k)})$ and $|d_1^{(k)}| \geq |d_2^{(k)}|$. We are interested in relating the Ritz eigenvalues $d_1^{(k)}, d_2^{(k)}$ to the eigenvalues $\lambda_1,\lambda_2$ of $\bA$. Towards that end, we define 
\[ \overline{d}^{(k)} := \max\{ d_1^{(k)},d_2^{(k)} \}, \quad \underline{d}^{(k)} := \min\{ d_1^{(k)},d_2^{(k)} \}. \]
Then, we have the following estimates.
\begin{proposition}\label{prop:rate-EV}
For every $k\geq 0$, it holds that
\begin{equation}
\label{eq:rate-EV}
\lambda_1 \geq \overline{d}^{(k)} \geq \lambda_1 - 2\|\bA\| \cdot \|\bk_1^{(k)}\|_2^2, \quad \lambda_2 \geq \underline{d}^{(k)}.
\end{equation}
\end{proposition}
\begin{proof}
For ease of exposition, we shall omit the superscript $(k)$ in $d_1^{(k)}$, $d_2^{(k)}$, $\overline{d}^{(k)}$, $\underline{d}^{(k)}$, $\bQ^{(k)}$, $\bP^{(k)}$, $\bV^{(k)}$, $\bW^{(k)}$, $\bk_1^{(k)}$, and $\bK^{(k)}$ throughout the proof.
Since $\overline{d} \geq \underline{d}$ are the eigenvalues of $\bQ^T\bA\bQ$ and $\lambda_1\geq \lambda_2$ are the largest two eigenvalues of $\bA$, we have $\lambda_1\geq \overline{d}$ and $\lambda_2\geq \underline{d}$ from the Courant-Fischer minimax theorem (see, e.g., \cite[Theorem 8.1.2]{golub2012matrix}). Moreover, using \eqref{eq:a-b}--\eqref{eq:PVW} and the definition of $\bK$ and letting $\he_1 = (1,0)$, we have 
\[ (\bV^{-1}\he_1)^T\bQ^T\bA\bQ(\bV^{-1}\he_1) = \he_1^T(\bLbd_\alpha + \bK^T\bLbd_\beta\bK)\he_1 = \lambda_1 + \bk_1^T\bLbd_\beta\bk_1 \]
and
\begin{equation}
\label{eq:V-1-e}
\|\bV^{-1}\he_1\|_2^2 = (\bV^{-1}\he_1)^T\bP^T\bP\bV^{-1}\he_1 = \he_1^T(\bI + \bK^T\bK)\he_1 = 1 + \|\bk_1\|_2^2, 
\end{equation} 
where the first equality in~\eqref{eq:V-1-e} is due to the fact that $\bP$ has orthonormal columns. Now, since $\overline{d}$ is the largest eigenvalue of $\bQ^T\bA\bQ$, we have
$$ \overline{d} \geq \frac{(\bV^{-1}\he_1)^T\bQ^T\bA\bQ(\bV^{-1}\he_1)}{\|\bV^{-1}\he_1\|_2^2} = \frac{\lambda_1 + \bk_1^T\bLbd_\beta\bk_1}{1 + \|\bk_1\|_2^2},  $$
which leads to
$$ \overline{d} - \lambda_1 \geq \frac{-\lambda_1\|\bk_1\|_2^2 + \bk_1^T\bLbd_\beta\bk_1}{1 + \|\bk_1\|_2^2} \geq -\frac{2\|\bA\| \cdot \|\bk_1\|_2^2}{1 + \|\bk_1\|_2^2}\geq -2\|\bA\| \cdot \|\bk_1\|_2^2. $$
The proof is then completed.
\end{proof}

\smallskip
%
Now, let $\overline{\bh}^{(k)}, \underline{\bh}^{(k)}$ denote the eigenvectors associated with the eigenvalues $\overline{d}^{(k)}$, $\underline{d}^{(k)}$, respectively. Furthermore, we define $\overline{\bq}^{(k)} := \bQ^{(k)}\overline{\bh}^{(k)}$ and $\underline{\bq}^{(k)} := \bQ^{(k)}\underline{\bh}^{(k)}$. Equipped with Propositions \ref{prop:K-rate} and \ref{prop:rate-EV}, we can establish the convergence rate of the orthogonal iterations.
\begin{theorem}
\label{thm:OI}
Let $c_1\ge1$ and $c_2>0$ be the constants in Lemma \ref{lem:A-2norm}. Suppose that 
\begin{equation}
\label{eq:n-bd-1}
n \ge \max\left\{\exp\left(\frac{16c_1^2}{\beta^2}\right), 6\right\}. 
\end{equation} 
Then, it holds with probability at least $1 - c_2n^{-3} - 4\sqrt{\log n/n}$ that
\begin{align}
\min_{\theta\in\{\pm 1\}}\|\overline{\bq}^{(k)} - \theta\bu_1\|_2 & \leq n\left(\frac{8\alpha}{\beta}+10\right)\left(\frac{4c_1}{(\alpha+\beta)\sqrt{\log n}}\right)^k,\quad \forall\ k \geq 0, \label{eq:q1-rate} \\
\min_{\theta\in\{\pm 1\}}\|\underline{\bq}^{(k)} - \theta\bu_2\|_2 & \leq n\left(\frac{16\alpha}{\beta}+18\right)\left(\frac{4c_1}{(\alpha-\beta)\sqrt{\log n}}\right)^k,\quad \forall\ k \geq \bar{K}, \label{eq:q2-rate} 
\end{align}
where
\begin{equation}
\label{eq:bK}
\bar{K} = \left\lceil \frac{2\log n + \log\left(\frac{8(\alpha+\beta)}{\beta}\right)}{\log\log n + 2\log\left(\frac{\alpha+\beta}{4c_1}\right)}    \right\rceil.
\end{equation}
\end{theorem}
\begin{proof}
Again, we shall omit the superscript $(k)$ throughout the proof for ease of exposition. By definition, we have
\begin{equation}
\label{eq:eig-H}
\bQ^T\bA\bQ\overline{\bh} = \overline{d}\,\overline{\bh}, \quad \bQ^T\bA\bQ\underline{\bh} = \underline{d}\,\underline{\bh}.
\end{equation}
Observe that for any $\bv\in\R^2$ with $\|\bv\|_2=1$, we have
\begin{align*}
& \|\overline{\bh} - \mbox{sgn}(\overline{\bh}^T\bv)\bv\|_2^2 = 2 - 2\mbox{sgn}(\overline{\bh}^T\bv) \overline{\bh}^T\bv = 2 - 2|\overline{\bh}^T\bv| \nonumber \\
& \quad \le 2 - 2(\overline{\bh}^T\bv)^2 = 2(\underline{\bh}^T\bv)^2, 
\end{align*}
where the first equality follows from $\|\overline{\bh}\|_2=1$, the second equality uses $|a| = \mbox{sgn}(a)\cdot a$ for any $a\in\R$, the inequality follows from $|\overline{\bh}^T\bv| \le \|\overline{\bh}\|_2\|\bv\|_2 = 1$, and the last equality is due to $\|\bv\|_2=1$ and $\bH$ being orthogonal.
This gives
\begin{align} \label{eq:claim}
\|\overline{\bh} - \mbox{sgn}(\overline{\bh}^T\bv)\bv\|_2 \le \sqrt{2}|\underline{\bh}^T\bv|
\end{align}
for any $\bv\in\R^2$ with $\|\bv\|_2=1$. 
Moreover, we obtain from \eqref{eq:eig-H} that
\begin{align}
& |(\lambda_1-\underline{d})\underline{\bh}^T\bV^{-1}\he_1| = |\lambda_1\underline{\bh}^T\bV^{-1}\he_1 - \underline{\bh}^T\bQ^T\bA\bQ\bV^{-1}\he_1| \nonumber \\
& \quad \leq \|\lambda_1\bV^{-1}\he_1 - \bP^T\bLbd\bP\bV^{-1}\he_1\|_2 \leq \|\lambda_1\bP\bV^{-1}\he_1 - \bLbd\bP\bV^{-1}\he_1\|_2 \nonumber \\
& \quad = \left\| \lambda_1 \begin{bmatrix}
\bI \\ \bK
\end{bmatrix}\he_1 - \begin{bmatrix}
\bLbd_\alpha & \bm{0} \\
\bm{0} & \bLbd_\beta
\end{bmatrix}\begin{bmatrix}
\bI \\ \bK
\end{bmatrix}\he_1\right\|_2 = \|\lambda_1\bk_1 - \bLbd_\beta\bk_1\|_2 \nonumber \\
& \quad\leq 2\|\bA\| \cdot \|\bk_1\|_2, \label{eq:lam-d}
\end{align} 
where the first inequality is due to $\|\underline{\bh}\|_2=1$ and $\bQ^T\bA\bQ = \bP^T\bLbd\bP$, the second inequality follows from $\bP^T\bP=\bI$ and $\|\bP^T\bv\|_2\le \|\bv\|_2$, and the second equality uses \eqref{eq:PVW} and $\bK = \bW\bV^{-1}$. 
Then, by letting $\bar{\theta} = \mbox{sgn}(\overline{\bh}^T\bV^{-1}\he_1)$, we have
\begin{equation}
\label{eq:step-1a}
\left\|\overline{\bh} - \bar{\theta}\frac{\bV^{-1}\he_1}{\|\bV^{-1}\he_1\|_2} \right\|_2 \leq \frac{{\sqrt{2}} |\underline{\bh}^T\bV^{-1}\he_1|}{\|\bV^{-1}\he_1\|_2} \leq \frac{{2\sqrt{2}}\|\bA\| \cdot \|\bk_1\|_2}{|\lambda_1 - \underline{d}|},
\end{equation}
where the first inequality follows from \eqref{eq:claim} and the second one is due to \eqref{eq:lam-d} and $\|\bV^{-1}\he_1\|_2 \geq 1$ (implied by \eqref{eq:V-1-e}). Also, by \eqref{eq:PVW} and $\bU^T\bu_1 = \bee_1$, we obtain
\begin{equation}
\label{eq:step-2a}
\|\bQ\bV^{-1}\he_1 - \bu_1\|_2 = \|\bP\bV^{-1}\he_1 - \bee_1\|_2 = \left\|\begin{bmatrix}
\bI \\ \bK
\end{bmatrix}\he_1 - \bee_1\right\|_2 = \|\bk_1\|_2.
\end{equation}
It then follows that
\begin{align}
& \min_{\theta\in\{\pm 1\}}\|\overline{\bq} - \theta\bu_1\|_2 \leq \|\overline{\bq} - \bar{\theta}\bu_1\|_2 = \|\bQ\overline{\bh} - \bar{\theta}\bu_1\|_2 \nonumber \\
& \ \leq \left\|\bQ\overline{\bh} - \bar{\theta}\frac{\bQ\bV^{-1}\he_1}{\|\bV^{-1}\he_1\|_2}\right\|_2 
+\left\|\bar{\theta}\frac{\bQ\bV^{-1}\he_1}{\|\bV^{-1}\he_1\|_2} - \bar{\theta}\frac{\bu_1}{\|\bV^{-1}\he_1\|_2}\right\|_2 \nonumber \\
& \quad+ \left\|\bar{\theta}\frac{\bu_1}{\|\bV^{-1}\he_1\|_2} - \bar{\theta}\bu_1\right\|_2 \nonumber \\
& \ \leq \left\|\overline{\bh} - \bar{\theta}\frac{\bV^{-1}\he_1}{\|\bV^{-1}\he_1\|_2} \right\|_2 + \frac{\|\bQ\bV^{-1}\he_1 - \bu_1\|_2}{\|\bV^{-1}\he_1\|_2} + \left|\frac{1}{\|\bV^{-1}\he_1\|_2} - 1\right| \nonumber \\
& \ \leq \frac{2\sqrt{2}\|\bA\| \cdot \|\bk_1\|_2}{|\lambda_1 - \underline{d}|} + \frac{\|\bk_1\|_2}{\sqrt{1 + \|\bk_1\|_2^2}} + \frac{\sqrt{1 + \|\bk_1\|_2^2}-1}{\sqrt{1 + \|\bk_1\|_2^2}} \nonumber \\
& \ \leq \left(\frac{{2\sqrt{2}}\|\bA\|}{|\lambda_1 - \underline{d}|} + 2\right)\|\bk_1\|_2, \label{eq:q1-u1}
\end{align}
where the third inequality uses the fact that $\bQ$ has orthonormal columns, $|\bar{\theta}|=1$, and $\|\bu_1\|_2=1$; the fourth one follows from \eqref{eq:V-1-e}, \eqref{eq:step-1a}, and \eqref{eq:step-2a}; the last one uses $\sqrt{1 + \|\bk_1\|_2^2}\geq 1$ and $\sqrt{1 + \|\bk_1\|_2^2} -1\leq \|\bk_1\|_2$.

By a similar argument, one can show that
\begin{equation}
\label{eq:q2-u2}
\min_{\theta\in\{\pm 1\}}\|\underline{\bq} - \theta\bu_2\|_2 \leq \left(\frac{{2\sqrt{2}}\|\bA\|}{|\lambda_2 - \overline{d}|} + 2 \right)\|\bk_2\|_2.
\end{equation}
Now, suppose that \eqref{dist:ortho-init}, \eqref{eq:lambda1}--\eqref{eq:lambdai}, and $\lambda_2>0$ hold, which happens with probability at least $1 - c_2n^{-3} - 4\sqrt{\log n/n}$ for all $n$ satisfying \eqref{eq:n-bd-1} due to Lemma \ref{lem:ortho-init}, Corollary \ref{coro:eig-bound}, and the union bound. Then, by \eqref{eq:lambda1}, \eqref{eq:lambda2}, Proposition \ref{prop:rate-EV}, and $n\geq \exp({16}c_1^2/\beta^2)$, we obtain 
\begin{equation}
\label{eq:bd-l1-l2}
|\lambda_1 - \underline{d}| \geq \lambda_1 - \lambda_2 \geq \beta\log n - {2}c_1\sqrt{\log n} \geq \frac{\beta}{2}\log n.
\end{equation} 
Moreover, it follows from \eqref{eq:lambda1}--\eqref{eq:lambdai} and $n\geq \exp(4c_1^2/(\alpha+\beta)^2)$ that
\begin{equation}
\label{eq:bd-A}
\|\bA\| = \lambda_1 \leq \frac{\alpha+\beta}{2}\log n + c_1\sqrt{\log n} \leq (\alpha+\beta)\log n. 
\end{equation} 
These, together with the first inequality in \eqref{eq:k-rate-n}, imply \eqref{eq:q1-rate} as desired. Besides, using $ { n > \exp(16c_1^2/(\alpha+\beta)^2) }$ and \eqref{eq:k-rate-n}, one can verify that for every $k\geq \bar{K}$, 
$$ \|\bk_1^{(k)}\|_2
\leq n\left(\frac{4c_1}{(\alpha+\beta)\sqrt{\log n}}\right)^{\bar{K}} \leq \sqrt{\frac{\beta}{8(\alpha+\beta)}} \leq \sqrt{\frac{\lambda_1 - \lambda_2}{4\|\bA\|}}, $$
where the last inequality follows from \eqref{eq:bd-l1-l2} and \eqref{eq:bd-A}. It then follows from Proposition \ref{prop:rate-EV} that $\lambda_1 - \overline{d}^{(k)} \leq 2\|\bA\| \cdot \|\bk_1\|_2^2 \leq (\lambda_1 - \lambda_2)/2$ for all $k\geq \bar{K}$. Together with \eqref{eq:bd-l1-l2}, this implies that for all $k\geq \bar{K}$,
\begin{equation}
\label{eq:bd-d1-l2}
\overline{d}^{(k)} - \lambda_2 = \overline{d}^{(k)} - \lambda_1 + \lambda_1 - \lambda_2 \geq \frac{\lambda_1 - \lambda_2}{2} \geq \frac{\beta}{4}\log n. 
\end{equation} 
The desired result {\eqref{eq:q2-rate}} then follows from \eqref{eq:k-rate-n}, {\eqref{eq:q2-u2}}, \eqref{eq:bd-A}, and \eqref{eq:bd-d1-l2}.
\end{proof}

\subsection{Analysis of the Method of Projected Power Iteration}\label{sec:ppm}
Now, let us turn to study the convergence behavior of the projected power iterations employed in the second stage of Algorithm \ref{alg:TSA}. Our goal is to show that if $\bx^{(0)}$ is properly chosen, then the projected power iterations will terminate in a finite number of iterations and output the ground-truth label vector $\bxs$ with high probability. To begin, let $\bA$ be as in Theorem~\ref{thm:main}. In particular, Lemma \ref{lem:A-2norm} and Corollary \ref{coro:eig-bound} can be applied here. 

Recall that the projected power iterations take the form 
\[ 
\bx^{(k)} \in \mP(\bA\bx^{(k-1)}), \quad k=1,2,\dots,
\] 
where $\mathcal{P}$ is the projection operator onto $\mathcal{F}$; see~\eqref{eq:def-FS}--\eqref{eq:PPI}. The following result shows that $\mathcal{P}$ possesses a Lipschitz-like property, despite the fact that $\mathcal{F}$ is a discrete set. Such a property plays an important role in the analysis of the projected power iterations; cf.~\cite{liu2017estimation,liu2017discrete}.

\begin{lemma}
\label{lem:Proj-cont}
Suppose that $\bc\in\mathbb{R}^n$ is arbitrary and $\varepsilon > 0$ is constant such that 
\begin{equation}
\label{eq:v2}
c_i \ \left\{ \begin{aligned}
&\geq \varepsilon, \quad \ \ i\in\mcI, \\
&\leq -\varepsilon, \quad i\in[n]\setminus\mcI
\end{aligned}\right.	
\end{equation}
for some $\mcI\subset[n]$ with $|\mcI| = n/2$.
Then, for any $\bv\in\mP(\bc)$, $\bcp\in\mathbb{R}^n$, and $\bvp\in\mP(\bcp)$, it holds that
\begin{equation}
\label{eq:Proj-cont}
 \|\bv - \bvp\|_2 \leq \frac{2\|\bc - \bcp\|_2}{\varepsilon}.
\end{equation}
\end{lemma}
\begin{proof}
By \eqref{eq:v2} and Proposition \ref{prop:cf-proj}, we see that $\mP(\bc)$ is a singleton and $\bv\in\mP(\bc)$ satisfies
\begin{equation}
\label{eq:vvv}
v_i = \left\{
\begin{array}{rl}
1, & \quad i\in\mcI, \\
-1, & \quad i\in[n]\setminus\mcI.
\end{array}\right.
\end{equation}
Let $\bcp\in\R^n$ be arbitrary and $\bvp \in \mP(\bcp)$. It then follows from Proposition \ref{prop:cf-proj} that 
\begin{equation}
\label{eq:vvvp}
v^\prime_i = \left\{
\begin{array}{rl}
1, & \quad i\in\mcJ, \\
-1, & \quad i\in[n]\setminus\mcJ
\end{array}\right.
\end{equation}
for some $\mcJ\subset[n]$ with $|\mcJ| = n/2$ such that $c^\prime_i\geq c^\prime_j$ for all $i\in\mcJ$ and $j\in[n]\setminus\mcJ$. For ease of exposition, we write $\mcI^c:=[n]\setminus\mcI$ and $\mcJ^c:=[n]\setminus\mcJ$. Since 
\[ |\mcI \cap \mcJ| + |\mcI \cap \mcJ^c| = |\mcI| = \frac{n}{2}, \quad |\mcI \cap \mcJ| + |\mcI^c \cap \mcJ| = |\mcJ| = \frac{n}{2}, \]
we deduce that $|\mcI\cap\mcJ^c| = |\mcI^c\cap\mcJ| = s$ for some $0\leq s\leq n/2$. In addition, by \eqref{eq:vvv} and \eqref{eq:vvvp}, we have
\begin{equation*}
v_i - v^\prime_i = \left\{\begin{array}{rl}
0, & \quad i\in (\mcI\cap\mcJ) \cup (\mcI^c\cap\mcJ^c), \\
2, & \quad i\in\mcI\cap\mcJ^c, \\
-2, & \quad i\in\mcI^c\cap\mcJ.
\end{array}\right.
\end{equation*}
Since $|\mcI\cap\mcJ^c| = |\mcI^c\cap\mcJ| = s$, this yields
\begin{equation}
\label{eq:v-diff}
\|\bv - \bvp\|_2 = 2\sqrt{2s}.
\end{equation}
On the other hand, we have
\begin{equation}\label{eq:c-cp-1}
\|\bc - \bcp\|_2^2 = \sum_{i=1}^n(c_i - c^\prime_i)^2  \geq \sum_{i\in\mcI\cap\mcJ^c}(c_i - c^\prime_i)^2 + \sum_{j\in\mcI^c\cap\mcJ}(c_j - c^\prime_j)^2.
\end{equation}
It follows from \eqref{eq:v2} that $c_i - c_j \geq 2\varepsilon$ for any $i\in\mcI$ and $j\in\mcI^c$. Besides, recall from \eqref{eq:vvvp} that $c^\prime_j \geq c^\prime_i$ for any $i\in\mcJ^c$ and $j\in\mcJ$.
Thus, for every $i\in\mcI\cap\mcJ^c$ and $j\in\mcI^c\cap\mcJ$, it holds that
$$ (c_i - c^\prime_i)^2 + (c_j - c^\prime_j)^2 \geq \frac{1}{2}(\underbrace{c_i - c_j}_{\geq 2\varepsilon} + \underbrace{c^\prime_j - c^\prime_i}_{\geq 0})^2 \geq 2\varepsilon^2, $$
where the first inequality is due to $a^2 + b^2 \geq (a-b)^2/2$ for any $a,b\in\R$. Using \eqref{eq:c-cp-1} and $|\mcI\cap\mcJ^c| = |\mcI^c\cap\mcJ| = s$, we obtain
\begin{equation}
\label{eq:c-diff}
\|\bc - \bcp\|_2^2 \geq 2s\varepsilon^2.
\end{equation}
The desired result \eqref{eq:Proj-cont} then follows from \eqref{eq:v-diff} and \eqref{eq:c-diff}.
\end{proof}

\smallskip
Next, we recall the following result, which is established in \cite{abbe2016exact} and pertains to the difference of two binomial random variables; see also \cite[Lemma 8]{abbe2017entrywise}.

\begin{lemma}
\label{lem:binom-tail}
Let $\alpha>\beta>0$ be given constants. Suppose that $\{W_i\}_{i=1}^{n/2}$ are i.i.d.~$\mathbf{Bern}(\alpha\log n/n)$ and $\{Z_i\}_{i=1}^{n/2}$ are i.i.d.~$\mathbf{Bern}(\beta\log n/n)$, with $\{Z_i\}_{i=1}^{n/2}$ being independent of $\{W_i\}_{i=1}^{n/2}$. Then, for any $\gamma\in\R$, it holds that
\begin{equation}
\label{eq:binom-tail}
\P\left( \sum_{i=1}^{n/2}W_i - \sum_{i=1}^{n/2}Z_i \leq \gamma\log n\right) \leq n^{-\frac{(\sqrt{\alpha} - \sqrt{\beta})^2}{2} + \frac{\gamma(\log\alpha-\log\beta)}{2}}.
\end{equation}
\end{lemma}

Equipped with Lemmas \ref{lem:Proj-cont} and \ref{lem:binom-tail}, we can show that the set-valued map $\bx \Mapsto \mP(\bA\bx)$ possesses a contraction property in a certain neighborhood of $\bxs$. This would then imply the linear convergence of the projected power iterations.

\begin{proposition}\label{prop:contrac-rate}
Suppose that the constants $\alpha>\beta>0$ satisfy $\sqrt{\alpha} - \sqrt{\beta} > \sqrt{2}$. 
Then, there exists a constant $\gamma > 0$, whose value depends only on $\alpha$ and $\beta$, such that the following statement holds with probability at least $1-n^{-\Omega(1)}$: For all $\bx \in \R^n$ such that $\bo^T\bx = 0$, $\|\bx\|_2=\sqrt{n}$, and 
\be\label{cond:lem-contrac-rate}
\|\bx-\bx^*\|_2 \le 5c_3\sqrt{\frac{n}{\log n}},
\ee
one has
\be\label{result:lem-contrac-rate}
\left\| \bv - \bx^* \right\|_2 \le \frac{c_4}{\gamma\sqrt{\log n}} \|\bx-\bx^*\|_2
\ee
for any $\bv\in\mP(\bA\bx)$, where $c_4:=3c_3(\alpha-\beta) + 2c_1$ and $c_1$, $c_3$ are the constants in Lemma \ref{lem:A-2norm} and Corollary \ref{coro:eig-bound}, respectively.
\end{proposition}
\begin{proof}
Since $\sqrt{\alpha} - \sqrt{\beta} > \sqrt{2}$, there exists a $\gamma > 0$, whose value depends only on $\alpha$ and $\beta$, such that 
\begin{equation}
\label{eq:c5}
c_5:=\frac{(\sqrt{\alpha} - \sqrt{\beta})^2}{2} - \frac{\gamma(\log\alpha - \log\beta)}{2}-1>0.
\end{equation}
According to the binary symmetric SBM in Definition \ref{model:SBM}, we have
$$ x_i^*(\bA\bxs)_i = \sum_{j=1}^n a_{ij}x_i^*x_j^* \overset{d}{=} \sum_{i=1}^{n/2}W_i - \sum_{i=1}^{n/2}Z_i $$
for every $i\in[n]$, where $\{W_i\}_{i=1}^{n/2}$ are i.i.d.~$\mathbf{Bern}(\alpha\log n/n)$, $\{Z_i\}_{i=1}^{n/2}$ are i.i.d. $\mathbf{Bern}(\beta\log n/n)$, and $\{Z_i\}_{i=1}^{n/2}$ are independent of $\{W_i\}_{i=1}^{n/2}$.
It then follows from Lemma \ref{lem:binom-tail}, \eqref{eq:c5}, and the union bound that
\begin{equation}
\label{eq:opt-tail-bd-B}	
\min_{i \in [n]} x_i^*(\bA\bxs)_i \geq \gamma\log n
\end{equation}
holds with probability at least $1 - n^{-c_5}$. 

In the rest of the proof, we suppose that both \eqref{eq:A-2norm} and \eqref{eq:opt-tail-bd-B} hold, which happens with probability at least 
{$1 - c_2n^{-3} - n^{-c_5}$} due to Lemma \ref{lem:A-2norm} and the union bound. Let $\mcI=\{i\in[n]: x_i^*=1\}$ and $\mcI^c = [n]\setminus\mcI$. Since $\bxs\in\mcF$, we have $|\mcI| = |\mcI^c| = n/2$ and $x^*_i = -1$ for all $i\in\mcI^c$. This, together with \eqref{eq:opt-tail-bd-B}, implies that
\begin{equation}
\label{eq:Ax}
(\bA\bxs)_i \left\{\begin{aligned}
&\geq\gamma\log n, \quad \ \ \, i\in\mcI, \\
&\leq-\gamma\log n, \quad i\in\mcI^c.
\end{aligned}\right.
\end{equation} 
It then follows from Proposition \ref{prop:cf-proj} that $\mP(\bA\bx^*) = \{\bx^*\}$. Now, let $\bx \in \R^n$ be such that $\bo^T\bx = 0$, $\|\bx\|_2=\sqrt{n}$, and \eqref{cond:lem-contrac-rate} holds. By \eqref{eq:Ax}, $\mP(\bA\bx^*) = \{\bx^*\}$, and Lemma~\ref{lem:Proj-cont}, we obtain
\begin{equation}
\label{eq:v-xs}
\|\bv - \bxs\|_2 \leq \frac{2\|\bA\bx - \bA\bxs\|_2}{\gamma\log n}
\end{equation} 
for any $\bv\in\mP(\bA\bx)$. In addition, using $\|\bxs\|_2 = \|\bx\|_2$, we compute
\begin{align*}
\|\bx - \bxs\|_2^2 = \|\bx\|_2^2 + \|\bxs\|_2^2 - 2\bx^T\bxs = 2\|\bxs\|_2^2 - 2\bx^T\bxs = 2( {\bxs}^T\bxs - \bx^T\bxs ).
\end{align*}
This, together with \eqref{eq:A-2norm}, \eqref{eq:lam-exp}, $\bo^T\bx=\bo^T\bxs=0$, and $\|\bxs\|_2=\sqrt{n}$, yields 
\begin{align}
	\|\bA\bx - \bA\bxs\|_2 & = \|\mathbb{E}[\bA](\bx - \bxs) + (\bA - \mathbb{E}[\bA])(\bx - \bxs)\|_2 \nonumber \\
	& \leq \left\| \left(\frac{p+q}{2}\bo\bo^T + \frac{p-q}{2}{\bx^*}{\bx^*}^T\right)(\bx-\bxs)\right\|_2 \nonumber \\
	& \quad+ \left\|\bA - \mathbb{E}[\bA]\right\|\cdot\left\|\bx-\bxs \right\|_2 \nonumber \\
	& = \left\| \frac{p-q}{2}\left( \bx^T\bxs -{\bxs}^T\bxs\right)\bxs\right\|_2 + \left\|\bA - \mathbb{E}[\bA]\right\| \cdot \left\|\bx-\bxs \right\|_2 \nonumber \\
	& \le \frac{p-q}{4}\sqrt{n} \|\bx - \bxs\|_2^2 + c_1\sqrt{\log n} \| \bx-\bxs \|_2. \label{eq:Ax-Axs}
\end{align}
Then, by \eqref{eq:p-q}, \eqref{cond:lem-contrac-rate}, \eqref{eq:v-xs}, and \eqref{eq:Ax-Axs}, we obtain for any $\bv\in\mP(\bA\bx)$ that 
\begin{align*}
\left\| \bv - \bxs \right\|_2
& \le \frac{2\|\bA\bx-\bA\bxs\|_2}{\gamma\log n} \\
& \leq \frac{2}{\gamma\log n}\left(\frac{(\alpha-\beta)\log n}{4\sqrt{n}}\|\bx - \bxs\|_2 + c_1\sqrt{\log n}\right)\|\bx - \bxs\|_2 \\
& \le \frac{ 3c_3(\alpha-\beta) + 2c_1 }{\gamma\sqrt{\log n}}\|\bx-\bxs\|_2 \\
& = \frac{c_4}{\gamma\sqrt{\log n}}\|\bx - \bxs\|_2.
\end{align*}
This completes the proof.
\end{proof}

\smallskip
Since the feasible set $\mF$ of Problem~\eqref{eq:MLE} is discrete, the contraction property~\eqref{result:lem-contrac-rate} suggests that if an iterate is sufficiently close to $\bxs$, then the projected power iterations will exhibit one-step convergence to $\bxs$; i.e., all subsequent iterates will stay at $\bxs$. This would then imply the finite termination of stage 2 of Algorithm~\ref{alg:TSA}. Let us now formalize the above observation.

\begin{proposition}\label{prop:one-step}
Suppose that the constants $\alpha>\beta>0$ satisfy $\sqrt{\alpha} - \sqrt{\beta} > \sqrt{2}$ and let $\gamma > 0$ be such that \eqref{eq:c5} holds. 
Then, the following statement holds with probability at least $1-n^{-\Omega(1)}$: For all $\bx \in \mcF$ such that $\|\bx-\bxs\|_2 \le \sqrt{2 \gamma\log n }$, one has
\be\label{result:lem-one-step}
\mP(\bA\bx) = \{\bxs\}.
\ee
\end{proposition}
\begin{proof}
Suppose that \eqref{eq:opt-tail-bd-B} holds, which happens with probability at least $1 - n^{-c_5}$, where $c_5>0$ is given in \eqref{eq:c5}. Let $\mcI=\{i\in[n]: x_i^*=1\}$ and $\mcI^c = [n]\setminus\mcI$. Since $\bxs\in\mcF$, we have $|\mcI| = |\mcI^c| = n/2$ and $x^*_i = -1$ for all $i\in\mcI^c$. Let $\bx\in\mcF$ be such that $\|\bx - \bxs\|_2 \leq \sqrt{2\gamma\log n}$. Since $\bx,\bxs\in\mcF$, there exist an $S\subset\mcI$ and an $S^\prime\subset\mcI^c$ with $|S| = |S^\prime| \leq n/2$ such that
\[
\bx = \bxs - 2\bm{e}_{S} + 2\bm{e}_{S^\prime},
\]
where $\bm{e}_{S}$ (resp.~$\bm{e}_{S^\prime}$) is an $n$-dimensional vector with $(\bm{e}_{S})_i=1$ if $i\in S$ (resp.~$S^\prime$) and $0$ otherwise. Observe that
\[ \sqrt{2\gamma\log n} \ge \|\bx - \bxs\|_2 = 2\| \bm{e}_{S^\prime} - \bm{e}_{S} \|_2 = 2\sqrt{|S|+|S^\prime|}. \]
This, together with $|S|=|S^\prime|$, implies that $|S| = |S^\prime| \le \gamma\log n/4$. Now, for all $i\in \mcI$, we get
\begin{align*}
(\bA\bx)_i & = (\bA\bxs)_i - 2(\bA\bm{e}_S)_i + 2(\bA\bm{e}_{S^\prime})_i \\
& \ge \gamma \log n - 2\sum_{j\in S} a_{ij} + 2 \sum_{j\in S^\prime} a_{ij}  \\
& \ge \gamma \log n - 2|S|  \\
& \ge \frac{1}{2}\gamma \log n,
\end{align*}
where the first inequality follows from \eqref{eq:opt-tail-bd-B}, the second inequality uses $0\leq a_{ij} \le 1$, and the last one is due to $|S| \le \gamma\log n/4$. By the same argument, we can show that for all $i\in\mcI^c$,
\[
(\bA\bx)_i \le -\frac{1}{2}\gamma \log n.
\]
These, together with Proposition \ref{prop:cf-proj}, imply \eqref{result:lem-one-step} as desired.
\end{proof}

\smallskip
The next result establishes the iteration complexity of the projected power iterations for finding the ground-truth label $\bxs$. Recall that $c_1$, $c_3$, and $c_4$ are the constants defined in Lemma \ref{lem:A-2norm}, Corollary \ref{coro:eig-bound}, and Proposition \ref{prop:contrac-rate}, respectively, whose values depend only on $\alpha$ and $\beta$.

\begin{theorem}\label{thm:PPI-bd}
Suppose that the constants $\alpha>\beta>0$ satisfy $\sqrt{\alpha} - \sqrt{\beta} > \sqrt{2}$ and let $\gamma > 0$ be such that \eqref{eq:c5} holds. Suppose in addition that
\begin{equation}
\label{eq:n-bd-3}
n > \max\left\{ \exp\left(\frac{c_4^2}{\gamma^2}\right), \exp\left(\frac{5c_3}{\sqrt{2\gamma}}\right) \right\}.
\end{equation}
Let $\{\bx^{(k)}\}_{k\geq 0}$ be the sequence generated by Algorithm \ref{alg:TSA}. If $\bx^{(0)}$ satisfies 
\begin{equation}
\label{eq:x0-cond}
\bo^T\bx^{(0)} =0, \quad \|\bx^{(0)}\|_2 =\sqrt{n}, \quad \|\bx^{(0)} - \bxs \|_2 \le 5c_3\sqrt{\frac{n}{\log n}},
\end{equation}
then the following statements hold with probability at least $1-n^{-\Omega(1)}$:
\begin{itemize}
\item[(i)] For all $k\geq 1$, it holds that
\be\label{eq:contrac-rate}
\| \bx^{(k)} - \bx^* \|_2 \le \frac{c_4}{\gamma\sqrt{\log n}} \|\bx^{(k-1)}-\bx^*\|_2 
\ee
and
\begin{equation}
\label{eq:xk-range}
\|\bx^{(k)} - \bxs \|_2 \le 5c_3\sqrt{\frac{n}{\log n}}.
\end{equation}
\item[(ii)] There exists some $k\leq \tilde{K}$ such that $\bx^{(k)} = \bx^{(k-1)} = \bxs$, where
\begin{equation}
\label{eq:PPI-bd}
{\tilde{K}= \left\lceil  \frac{\log n}{\log\log n + 2\log\left( \frac{\gamma}{c_4} \right)} \right\rceil + 2}.
\end{equation}
\end{itemize}
\end{theorem}
\begin{proof}
Suppose that the statements in Propositions \ref{prop:contrac-rate} and \ref{prop:one-step} hold simultaneously, which happens with probability at least $1 - n^{-\Omega(1)}$ by the union bound. We first prove (i). It follows from \eqref{eq:x0-cond} that \eqref{eq:xk-range} holds for $k=0$. Moreover, by \eqref{eq:x0-cond}, Proposition \ref{prop:contrac-rate}, and $\bx^{(1)}\in\mP(\bA\bx^{(0)})$, one can observe that \eqref{eq:contrac-rate} holds for $k=1$. These, together with \eqref{eq:n-bd-3}, yield
$$ \| \bx^{(1)} - \bx^* \|_2 \le \frac{c_4}{\gamma\sqrt{\log n}} \|\bx^{(0)}-\bx^*\|_2 \leq 5c_3\sqrt{\frac{n}{\log n}} $$
and thus \eqref{eq:xk-range} holds for $k=1$. Then, (i) can be established by a simple inductive argument. Next, we prove (ii). By \eqref{eq:n-bd-3}, we have $c_4/(\gamma\sqrt{\log n}) < 1$ and $5c_3/\sqrt{\log n} < \sqrt{2\gamma\log n}$. This, together with \eqref{eq:x0-cond}, \eqref{eq:contrac-rate}, and \eqref{eq:PPI-bd}, yields 
$$ 
\begin{aligned}
& \|\bx^{(\tilde{K}-2)} - \bxs\|_2 \leq \|\bx^{(0)} - \bxs\|_2 \left(\frac{c_4}{\gamma\sqrt{\log n}}\right)^{\tilde{K}-2} \\
& \quad \leq 5c_3\sqrt{\frac{n}{\log n}}\left(\frac{c_4}{\gamma\sqrt{\log n}}\right)^{\frac{\log n}{\log\log n + 2\log(\gamma/c_4)}} = \frac{5c_3}{\sqrt{\log n}} \leq \sqrt{2\gamma\log n}.
\end{aligned}
$$ 
By Proposition \ref{prop:one-step} and the projected power iterations~\eqref{eq:PPI}, we have $\bx^{(\tilde{K}-1)} = \bxs$. By applying Proposition \ref{prop:one-step} to $\bx^{(\tilde{K}-1)}$, we further have $\bx^{(\tilde{K})} = \bx^{(\tilde{K}-1)} = \bxs$. This, together with the stopping criterion in Algorithm \ref{alg:TSA}, completes the proof of (ii).
\end{proof}

\smallskip
Let us make two remarks before we leave this sub-section. First, due to the symmetry of Problem \eqref{eq:MLE}, Theorem \ref{thm:PPI-bd} also holds if we replace all the $\bxs$ in its statement by $-\bxs$. Second, Theorem \ref{thm:PPI-bd}(ii) implies that the second stage of Algorithm \ref{alg:TSA} terminates in a finite number of iterations at an optimal solution of Problem \eqref{eq:MLE}, provided that $\bx^{(0)}$ is properly chosen. In particular, for any $\bx^{(0)}$ satisfying \eqref{eq:x0-cond}, it terminates in roughly $\mathcal{O}(\log n/\log\log n)$ iterations for all sufficiently large $n$.

\subsection{Proofs of Theorem \ref{thm:main} and Corollary \ref{coro:tt-cpu-cost}}

With the preparations in Sections~\ref{sec:oim} and~\ref{sec:ppm}, we are ready to establish the main results stated in Section \ref{sec:main-results}. We first provide a formal version of Theorem \ref{thm:main} and its proof. Recall that the constants $c_1, c_3, c_4$ are given in Lemma \ref{lem:A-2norm}, Corollary \ref{coro:eig-bound}, and Proposition \ref{prop:contrac-rate}, respectively, and their values depend only on $\alpha$ and $\beta$. 

\begin{theorem}\label{thm-1}
Consider the setting of Theorem~\ref{thm:main}. Suppose that $\sqrt{\alpha} - \sqrt{\beta} > \sqrt{2}$ and let $\gamma>0$ be such that \eqref{eq:c5} holds. In addition, suppose that
\begin{equation}
\label{eq:n-bd-2}
n > \max\left\{\exp\left(\frac{16c_1^2}{\min\{(\alpha-\beta)^2,\beta^2\}}\right), \exp\left( \frac{5c_3}{\sqrt{2\gamma}} \right), \exp\left(\frac{c_4^2}{\gamma^2}\right), 6 \right\}. 
\end{equation}
Then, with probability at least $1 - n^{-\Omega(1)}$, Algorithm \ref{alg:TSA} takes at most $N_1$ orthogonal iterations and $N_2$ projected power iterations to find $\bx^*$ or $-\bxs$, where
\begin{equation}
\label{eq:N1}
N_1 = \left\lceil  \frac{2\log n + \log\log n + 2\log\left(\frac{16\alpha+18\beta}{\beta\cdot\min\{1, c_3\}}\right)}{\log\log n + 2\log\left(\frac{\alpha-\beta}{4c_1}\right)} \right\rceil,
\end{equation}
\begin{equation}
\label{eq:N2}
N_2 = \left\lceil  \frac{\log n}{\log\log n + 2\log\left(\frac{\gamma}{c_4}\right)} \right\rceil + 2.
\end{equation}
\end{theorem}
\begin{proof}
Suppose that the statements in Lemma \ref{lem:A-2norm}, Theorem \ref{thm:OI}, and Theorem \ref{thm:PPI-bd} hold, which happens with probability at least $1-n^{-\Omega(1)}$ due to the union bound. Recall from line~\ref{line:oi-end} of Algorithm \ref{alg:TSA} that $\bar{\bQ}^{(N_1) = \bQ^{(N_1)}\bH^{(N_1)}}$. Moreover, the columns of $\bar{\bQ}^{(N_1)}$ are denoted by $\overline{\bq}^{(N_1) = \bQ^{(N_1)}\overline{\bh}^{(N_1)}}$ and $ \underline{\bq}^{(N_1) = \bQ^{(N_1)}\underline{\bh}^{(N_1)}}$, where $\overline{\bh}^{(N_1)},\underline{\bh}^{(N_1)}$ are the eigenvectors of $ \bQ^{{(N_1)}^T \bA {\bQ}^{(N_1)}}$ associated with the eigenvalues $\overline{d}^{(N_1)}, \underline{d}^{(N_1)}$, respectively, and $\overline{d}^{(N_1)} \ge \underline{d}^{(N_1)}$.

By $\alpha>\beta>0$, \eqref{eq:bK}, and \eqref{eq:N1}, one can verify that $N_1 \ge \bar{K}$. It then follows from Theorem \ref{thm:OI} and \eqref{eq:N1} that there exist $\theta_1,\theta_2\in\{\pm 1\}$ satisfying
\begin{align}
\|\overline{\bq}^{(N_1)} - \theta_1\bu_1\|_2 & \leq n\left(\frac{8\alpha}{\beta}+10\right)\left(\frac{4c_1}{(\alpha+\beta)\sqrt{\log n}}\right)^{N_1} \leq \frac{c_3}{\sqrt{\log n}}, \label{eq:q1-rate-1} \\
\|\underline{\bq}^{(N_1)} - \theta_2\bu_2\|_2 & \leq n\left(\frac{16\alpha}{\beta}+18\right)\left(\frac{4c_1}{(\alpha-\beta)\sqrt{\log n}}\right)^{N_1} \leq \frac{c_3}{\sqrt{\log n}}. \label{eq:q2-rate-1}
\end{align}
After obtaining $\bar{\bQ}^{(N_1)}$, Algorithm \ref{alg:TSA} generates an initial point $\bx^{(0)}$ for the projected power iterations by setting $\bx^{(0)} = \sqrt{n}\by^{(0)}/\|\by^{(0)}\|_2$, where $\by^{(0)} = \underline{\bq}^{(N_1)} - (\bo^T\underline{\bq}^{(N_1)}/n)\bo$. Since $\|\bo\|_2 = \sqrt{n}$ and $\|\underline{\bq}^{(N_1)}\|_2 = 1$, we see that $\|\by^{(0)}\|_2 \leq 1$ and
$$ {\by^{(0)}}^T\underline{\bq}^{(N_1)} = 1 - \frac{(\bo^T\underline{\bq}^{(N_1)})^2}{n} \geq 0. $$
These, together with $\|\bx^{(0)}\|_2 = \sqrt{n}$, yield
$$
\begin{aligned}
\|\bx^{(0)} -  \sqrt{n}\underline{\bq}^{(N_1)}\|_2^2 & = 2n - 2n\frac{(\by^{(0)})^T \underline{\bq}^{(N_1)}}{\|\by^{(0)}\|_2} \leq 2n - 2n(\by^{(0)})^T\underline{\bq}^{(N_1)} \\
& = 2n - 2n\left(1 - \frac{(\bo^T\underline{\bq}^{(N_1)})^2}{n}\right) = 2(\bo^T\underline{\bq}^{(N_1)})^2,
\end{aligned}
$$
which implies that
\begin{align}\label{eq-1:thm-1}
\|\bx^{(0)} -  \sqrt{n}\underline{\bq}^{(N_1)} \|_2 \le \sqrt{2} | \bo^T\underline{\bq}^{(N_1)} |.
\end{align}
Besides, by \eqref{eq:u1-pert}, there exist $\tilde{\theta}_1, \tilde{\theta}_2 \in\{\pm 1\}$ such that 
\begin{equation}
\label{eq:u1u2-pert}
\left\|\tilde{\theta}_1\bu_1 - \frac{\bo}{\sqrt{n}}\right\|_2 \leq \frac{c_3}{\sqrt{\log n}}, \ \quad \left\|\tilde{\theta}_2\bu_2 - \frac{\bxs}{\sqrt{n}}\right\|_2 \leq \frac{c_3}{\sqrt{\log n}}.
\end{equation} 
This, together with \eqref{eq:q1-rate-1}, \eqref{eq-1:thm-1}, $(\overline{\bq}^{(N_1)})^T\underline{\bq}^{(N_1)} = 0$, and $\|\underline{\bq}^{(N_1)}\|_2=1$, yields
\begin{align}
\|\bx^{(0)} - \sqrt{n}\underline{\bq}^{(N_1)}\|_2 & \leq \sqrt{2n} \left| \left(\frac{\bo }{\sqrt{n}}- \tilde{\theta}_1\bu_1 + \tilde{\theta}_1\bu_1 - \frac{\tilde{\theta}_1}{\theta_1}\overline{\bq}^{(N_1)} \right)^T \underline{\bq}^{(N_1)} \right| \nonumber \\
& \leq \sqrt{2n}\left( \left\|\tilde{\theta}_1\bu_1 - \frac{\bo}{\sqrt{n}}\right\|_2 + \left\|\theta_1\bu_1 - \overline{\bq}^{(N_1)}\right\|_2 \right) \nonumber \\
& \leq 2\sqrt{2}c_3\sqrt{\frac{n}{\log n}} \label{eq:x0-q2}.
\end{align}
Then, by \eqref{eq:q2-rate-1}, \eqref{eq:u1u2-pert}, \eqref{eq:x0-q2}, and $\theta_2,\tilde{\theta}_2\in\{\pm 1\}$, we obtain
\begin{align*}
& \min_{\theta\in\{\pm 1\}} \|\bx^{(0)} - \theta\bxs\|_2 \leq \left\|\bx^{(0)} - \frac{\theta_2}{\tilde{\theta}_2}\bxs \right\|_2 \\
& \quad \le \| \bx^{(0)} - \sqrt{n}\underline{\bq}^{(N_1)}\|_2 + \sqrt{n}\|\underline{\bq}^{(N_1)} - \theta_2\bu_2\|_2 + \sqrt{n}\left\|\theta_2\bu_2 - \frac{\theta_2}{\tilde{\theta}_2}\frac{\bx^*}{\sqrt{n}}\right\|_2 \\
& \quad = \| \bx^{(0)} - \sqrt{n}\underline{\bq}^{(N_1)}\|_2 + \sqrt{n}\|\underline{\bq}^{(N_1)} - \theta_2\bu_2\|_2 + \sqrt{n}\left\|\tilde{\theta}_2\bu_2 - \frac{\bx^*}{\sqrt{n}}\right\|_2 \\
& \quad \le (2+2\sqrt{2})c_3\sqrt{\frac{n}{\log n}} \leq 5c_3\sqrt{\frac{n}{\log n}}.
\end{align*}
This, together with $\|\bx^{(0)}\|_2=\sqrt{n}$, $\bo^T\bx^{(0)}=0$, and Theorem \ref{thm:PPI-bd}, implies that if $N_2=\tilde{K}$, where $\tilde{K}$ is defined in \eqref{eq:PPI-bd}, then Algorithm \ref{alg:TSA} can find $\bxs$ or $-\bxs$ with probability at least $1-n^{-\Omega(1)}$.
\end{proof}

\smallskip
Armed with the results in Theorem~\ref{thm-1}, we can now provide a proof of Corollary \ref{coro:tt-cpu-cost}.

\begin{proof}[of Corollary \ref{coro:tt-cpu-cost}]
From the discussion following Theorem~\ref{thm:main}, it remains to bound the number of non-zero entries in $\bA$.  Towards that end, let us first estimate the number of non-zero entries in an arbitrary column $\bm{a} \in \mathbb{R}^n$ of $\bA$, which we denote by $\|\bm{a}\|_0$. According to the binary symmetric SBM in Definition \ref{model:SBM}, we have 
\[
\|\bm{a}\|_0 \overset{d}{=} \sum_{i=1}^{n/2} W_i + \sum_{i=1}^{n/2} Z_i,
\]
where $\{W_i\}_{i=1}^{n/2}$ are i.i.d.~$\mathbf{Bern}(p)$, $\{Z_i\}_{i=1}^{n/2}$ are i.i.d.~$\mathbf{Bern}(q)$, and $\{Z_i\}_{i=1}^{n/2}$ are independent of $\{W_i\}_{i=1}^{n/2}$. It follows that 
$$\E[\|\bm{a}\|_0] = \frac{n}{2}(p+q), \qquad \mathrm{Var}(\|\bm{a}\|_0) \leq \frac{n}{2}(p+q). $$ 
Upon applying Bernstein's inequality for bounded distributions (see, e.g.,~\cite[Theorem 2.8.4]{vershynin2018high}), we get
\begin{align*}
& \P\left( \left| \|\bm{a}\|_0-\frac{n}{2}(p+q) \right| \ge \frac{3}{2}n(p+q) \right) \le 2\exp\left( - \frac{\frac{9}{8}n^2(p+q)^2}{\frac{1}{2}n(p+q)+\frac{1}{2}n(p+q)} \right) \\
& \quad = 2\exp\left( -\frac{9}{8}n(p+q) \right) = 2\exp\left( -\frac{9}{8}(\alpha+\beta)\log n \right) = 2n^{-\frac{9}{8}(\alpha+\beta)}.
\end{align*}
This gives
\[
\P\left( \|\bm{a}\|_0 < 2n(p+q) \right) \ge 1 - 2n^{-\frac{9}{8}(\alpha+\beta)} \geq 1 - 2n^{-\frac{9}{4}},
\]
where the second inequality is due to $\alpha + \beta >2$, which follows from $\alpha>\beta>0$ and $\sqrt{\alpha} - \sqrt{\beta}>\sqrt{2}$. 

Now, upon applying the union bound, we conclude that with probability at least $1 - 2n^{-5/4}$, the number of non-zero entries in $\bA$ is less than $2n^2(p+q) = 2(\alpha+\beta)n\log n$. Thus, the per-iteration cost of Algorithm \ref{alg:TSA} is $\mathcal{O}(n\log n)$. Together with Theorem \ref{thm:main}, the desired bound on the total computational cost of Algorithm \ref{alg:TSA} follows.
\end{proof}

\subsection{Discussion} \label{subsec:discuss}
Before we leave this section, let us discuss two further consequences of our technical development. 

\subsubsection{Binary Symmetric SBM without Self-Loops} \label{sec:no-loop}
Consider a variant of the binary symmetric SBM in which no self-loop is allowed in the graph. Specifically, the entries of the adjacency matrix $\bA$ are still generated independently according to~\eqref{eq:SBM}, except that $a_{ii}=0$ for all $i\in[n]$. Our results are still valid under this setting. Indeed, it is easy to verify that 
\[ \E[\bA] = \frac{p+q}{2}\bo\bo^T + \frac{p-q}{2}{\bx^*}{\bx^*}^T - p\bm{I}. \]
In particular, the eigenvalues of $\E[\bA]$ change by $p=\mO(\log n/n)$, which is dominated by other quantities that appear Lemma~\ref{lem:A-2norm} and Corollary~\ref{coro:eig-bound}. Thus, the results in Section~\ref{sec:oim} still hold. Moreover, observe that
\[ x_i^*(\bA\bxs)_i = \sum_{j\not=i} a_{ij}x_i^*x_j^* \overset{d}{=} \sum_{i=1}^{n/2-1}W_i - \sum_{i=1}^{n/2}Z_i \]
for every $i\in[n]$, where, as before, $\{W_i\}_{i=1}^{n/2-1}$ are i.i.d.~$\mathbf{Bern}(\alpha\log n/n)$, $\{Z_i\}_{i=1}^{n/2}$ are i.i.d. $\mathbf{Bern}(\beta\log n/n)$, and $\{Z_i\}_{i=1}^{n/2}$ are independent of $\{W_i\}_{i=1}^{n/2-1}$. Since
\[ \P\left( \sum_{i=1}^{n/2-1}W_i - \sum_{i=1}^{n/2}Z_i \leq \gamma\log n\right) \le \P\left( \sum_{i=1}^{n/2-1}W_i - \sum_{i=1}^{n/2-1}Z_i \leq \gamma\log n + 1 \right) \]
for any $\gamma\in\mathbb{R}$, we can apply Lemma~\ref{lem:binom-tail} and obtain the results in Section~\ref{sec:ppm}. Hence, we can conclude that Theorem~\ref{thm-1} holds. Lastly, the number of non-zero entries in $\bA$ can only decrease if there is no self-loop, which implies that Corollary~\ref{coro:tt-cpu-cost} also holds.

\subsubsection{Implementation of the Vanilla Spectral Method and Its Complexity Analysis} \label{sec:spectral-time}

Recently, Abbe et al.~\cite{abbe2017entrywise} have shown that the vanilla spectral method, which is presented in Algorithm~\ref{alg:spectral}, can also achieve exact recovery down to the information-theoretic limit in the binary symmetric SBM. A popular and practically efficient way of implementing this method is to employ orthogonal iterations to find the eigenvector $\bm{u}_2$ of the adjacency matrix $\bA$. However, the results in~\cite{abbe2017entrywise} do not establish the number of orthogonal iterations needed to obtain a sufficiently accurate approximation of $\bm{u}_2$ that can provably recover the communities in the graph. 

\begin{algorithm}[htb]
\DontPrintSemicolon
\SetKwInput{KwInput}{Input}               
\SetKwInput{KwOutput}{Output} 
\KwInput{adjacency matrix $\bA$}
\KwOutput{label vector $\hat{\bx}$}       
compute $\bm{u}_2$, the eigenvector of $\bA$ associated with the second largest eigenvalue of $\bA$ \label{line:eig} \\
output $\hat{\bm x} = \mbox{sgn}(\bm{u}_2)$
\caption{Vanilla Spectral Method}
\label{alg:spectral}
\end{algorithm}

By combining the results in~\cite{abbe2017entrywise} with those in Sections~\ref{sec:oim} and~\ref{sec:ppm}, we now show that if the vanilla spectral method is implemented as $\mO(\log n/\log \log n)$ orthogonal iterations followed by a single projection onto $\mF$, then it can exactly recover the underlying communities with high probability. In particular, such an implementation runs in $\mO(n\log^2n/\log\log n)$ time, which, to the best of our knowledge, is currently the best complexity bound for the vanilla spectral method in the context of exact community recovery in the binary symmetric SBM. To begin, we recall~\cite[Corollary 3.1]{abbe2017entrywise}, which states that with probability $1-\mO(n^{-3})$, one has
\begin{equation} \label{eq:entrywise-bd}
\min_{\theta\in\{\pm1\}} \left\| \bm{u}_2 - \theta\frac{\bA\bxs}{\nu_2\sqrt{n}} \right\|_{\infty} \le \frac{C}{\sqrt{n}\log\log n}, 
\end{equation}
where $C>0$ is a constant whose value depends only on $\alpha$ and $\beta$, and $\nu_2$ is the second-largest eigenvalue of $\E[\bA]$ (see~\eqref{eq:nu-eig-EA}).
By Theorem~\ref{thm:OI}, after 
\[ N = \left\lceil  \frac{3\log n + 2 \log\log\log n + 2\log\left(\frac{16\alpha+18\beta}{C\beta}\right)}{\log\log n + 2\log\left(\frac{\alpha-\beta}{4c_1}\right)} \right\rceil \ge \bar{K} \]
orthogonal iterations, where $\bar{K}$ is given in~\eqref{eq:bK}, we obtain a vector $\underline{\bq}^{(N)}$ satisfying
\begin{equation} \label{eq:oi-bd}
\min_{\theta\in\{\pm1\}} \| \underline{\bq}^{(N)} - \theta \bm{u}_2 \|_2 \le \frac{C}{\sqrt{n}\log\log n}.
\end{equation}
From~\eqref{eq:entrywise-bd} and~\eqref{eq:oi-bd}, we deduce the existence of $\tilde{\theta}\in\{\pm1\}$ such that 
\begin{equation} \label{eq:approx-bd}
\left\| \underline{\bq}^{(N)} - \tilde{\theta}\frac{\bA\bxs}{\nu_2\sqrt{n}} \right\|_{\infty} \le \frac{2C}{\sqrt{n}\log\log n}.
\end{equation}
Now, by~\eqref{eq:opt-tail-bd-B}, it holds with probability at least $1-n^{-c_5}$ that
\begin{equation} \label{eq:scaled-gt}
\min_{i \in [n]} \left| \frac{(\bA\bxs)_i}{\nu_2\sqrt{n}} \right| \geq \frac{2\gamma}{\sqrt{n}(\alpha-\beta)}.
\end{equation}
It follows that
\[ 
\bxs = \mbox{sgn}(\bxs) = \mbox{sgn}(\bA\bxs) = \mbox{sgn}\left( \frac{\bA\bxs}{\nu_2\sqrt{n}} \right) = \tilde{\theta} \cdot \mbox{sgn}(\underline{\bq}^{(N)}), 
\]
where the second equality is due to~\eqref{eq:opt-tail-bd-B} and the fourth equality uses~\eqref{eq:approx-bd} and~\eqref{eq:scaled-gt}. The above implies that $\underline{\bq}^{(N)}$ has $n/2$ positive entries and $n/2$ negative entries. Thus, by Proposition~\ref{prop:cf-proj}, we have $\mP(\underline{\bq}^{(N)}) \in \{\pm\bxs\}$ as desired.

\section{Numerical Results}\label{sec:num}

In this section, we report the recovery performance and numerical efficiency of our proposed two-stage method (which we denote by PPM in this section for ease of reference) for community recovery on both synthetic and real data sets.  We also compare our approach with four existing approaches, which are the SDP-based approach in \cite{amini2018semidefinite}, the manifold optimization (MFO)-based approach in \cite{bandeira2016low}, the spectral clustering (SC) approach in \cite{abbe2017entrywise}, and the two-stage approach based on the generalized power method (GPM) in \cite{wang2020non}. In the implementation, we use alternating direction method of multipliers (ADMM) to solve the SDP as suggested in \cite{amini2018semidefinite}, manifold gradient descent (MGD) method to solve the MFO, and the MATLAB function \textsf{eigs} for computing the eigenvector that is needed in the SC approach. 
Our codes are implemented in MATLAB R2020a and can be downloaded at \url{https://github.com/peng8wang/MP-Exact-Recovery-in-SBM}. All the experiments are conducted on a PC with 16GB memory and Intel(R) Core(TM) i5-8600 3.10GHz CPU. 

\subsection{Phase Transition and Computation Time}

We first examine the phase transition property and runtime of the aforementioned methods for recovering communities in graphs that are randomly generated according to the binary symmetric SBM, both with and without self-loops (see Definition~\ref{model:SBM} and Section~\ref{sec:no-loop}). We choose $n=300$ in the experiments and let the parameters $\alpha$ and $\beta$ in~\eqref{eq:p-q} vary from $0$ to $30$ and $0$ to $10$ with increments of $0.5$ and $0.4$, respectively. For every pair of $\alpha$ and $\beta$, we generate $40$ instances and calculate, for all the methods, the ratio of exact recovery. The simulation results are presented in Figure \ref{fig-1}, Figure \ref{fig-6},  and Table \ref{table-1}. It can be seen that all methods exhibit a phase transition phenomenon and the recovery performance of PPM is slightly better than the other three methods. Moreover, Figures \ref{fig-1}(a) and \ref{fig-6}(a) suggest that PPM can achieve the optimal recovery threshold on graphs with and without self-loops, respectively. This supports the results in Theorem \ref{thm:main} and Section~\ref{sec:no-loop}. In Table \ref{table-1}, we record the total CPU time consumed by each approach for completing the phase transition experiment.  It can be observed from the table that PPM is comparable to GPM, slightly better than SC, and substantially faster than SDP and MGD.

\begin{figure*}[!htbp]
	\begin{minipage}[b]{0.32\linewidth}
		\centerline{\includegraphics[width=\linewidth]{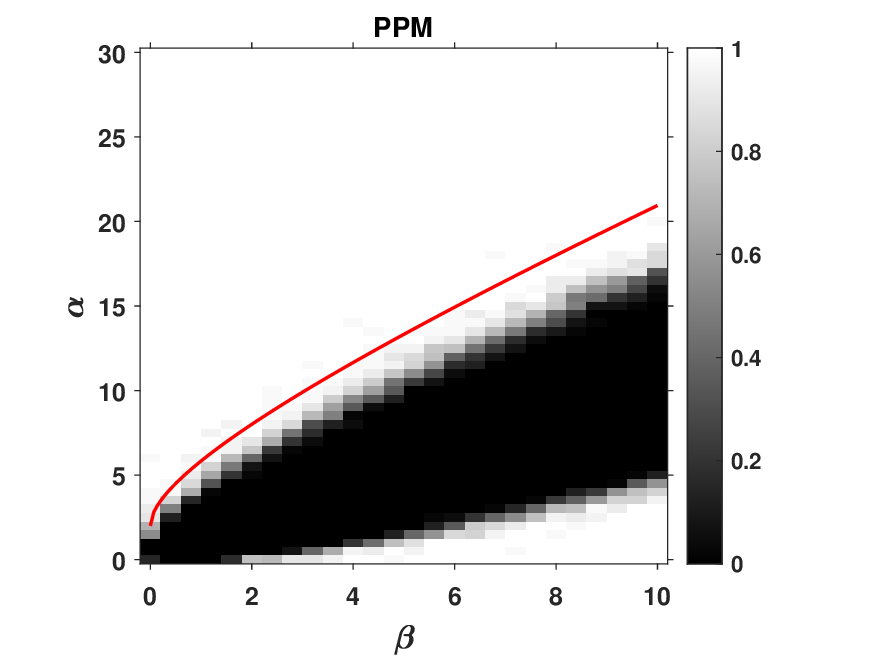}}
		\centerline{(a) PPM} \smallskip
	\end{minipage}
    \begin{minipage}[b]{0.32\linewidth}
		\centerline{\includegraphics[width=\linewidth]{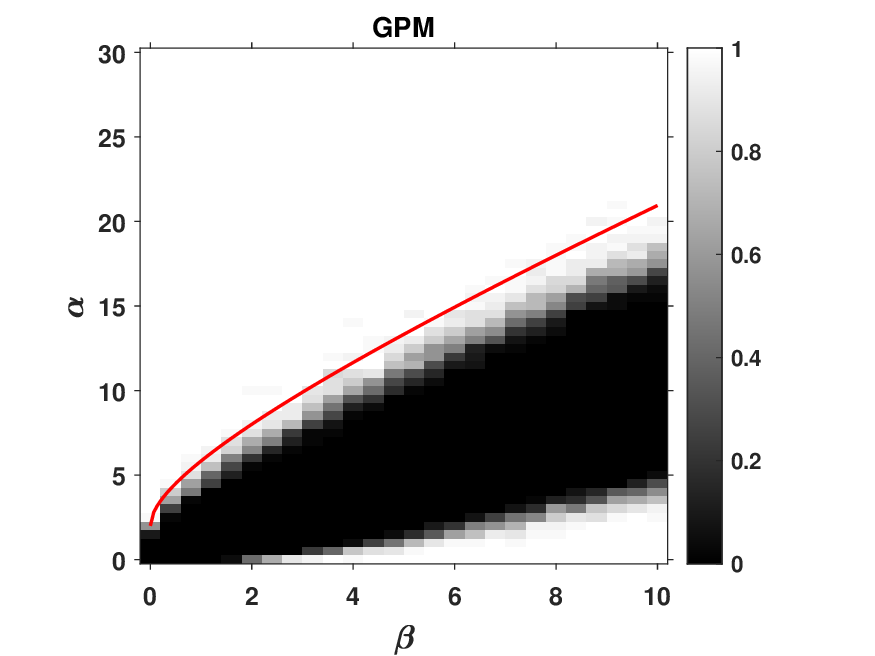}}
		\centerline{(b) GPM} \smallskip
	\end{minipage}
	\begin{minipage}[b]{0.32\linewidth}
		\centerline{\includegraphics[width=\linewidth]{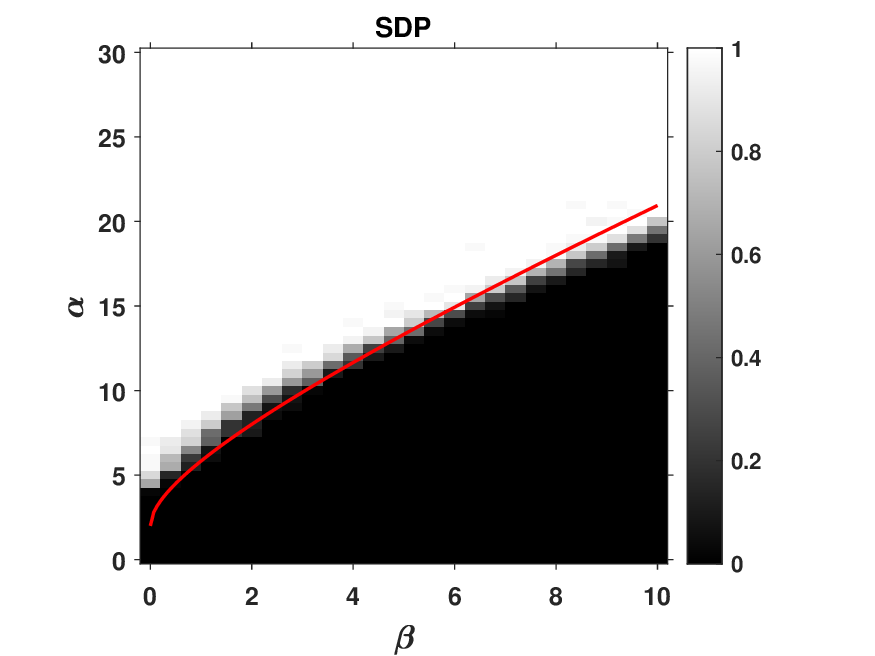}}
		\centerline{(c) SDP}\smallskip
	\end{minipage}
	\begin{center}
	\begin{minipage}[b]{0.32\linewidth}
		\centerline{\includegraphics[width=\linewidth]{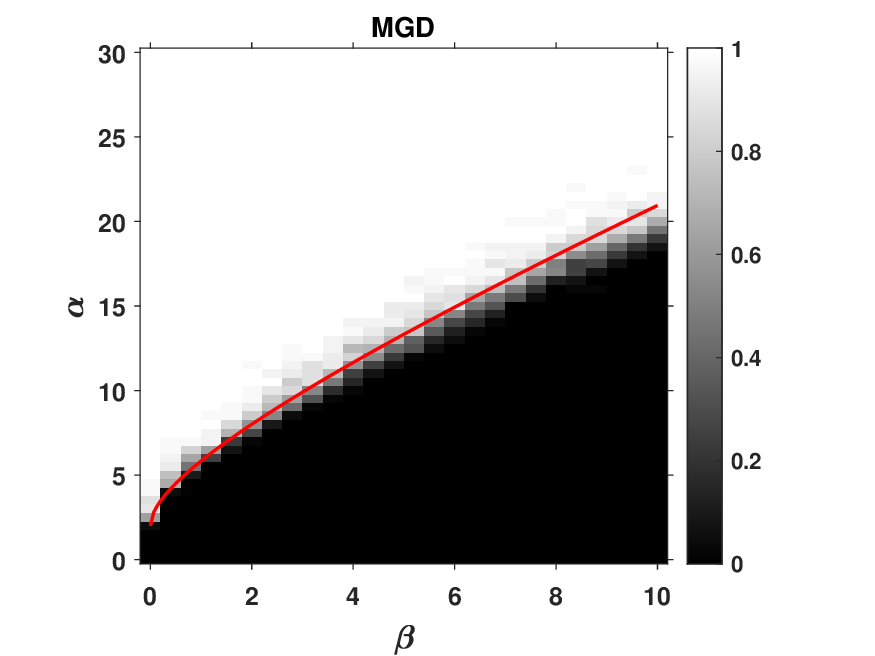}}
		\centerline{(d) MGD}\smallskip
	\end{minipage}
	\begin{minipage}[b]{0.32\linewidth}
		\centerline{\includegraphics[width=\linewidth]{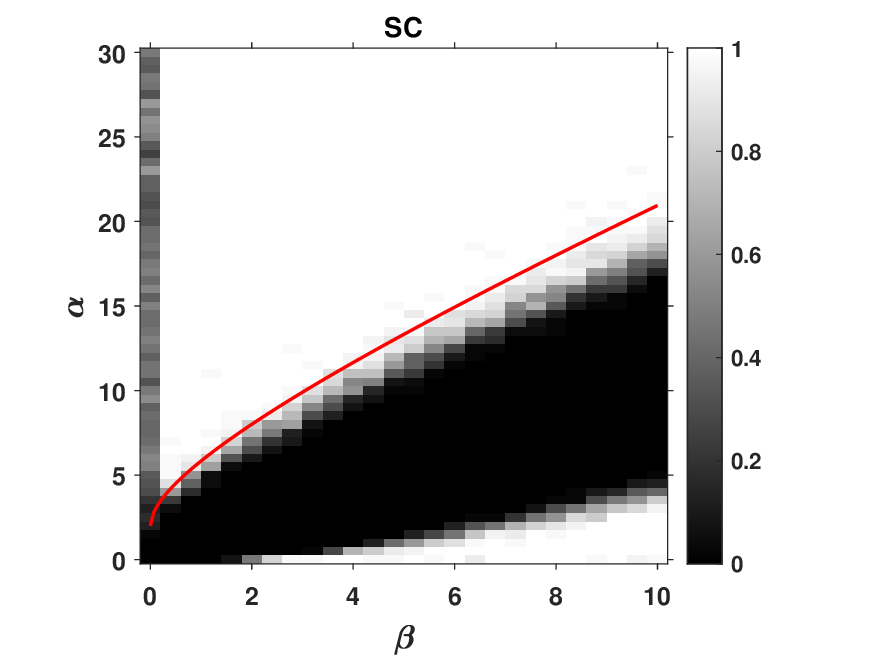}}
		\centerline{(e) SC}\smallskip
	\end{minipage}
	\end{center}
	\caption{Phase transition in graphs generated by the SBM with self-loops: The $x$-axis is $\beta$, which ranges from $0$ to $10$ with an increment of $2$; the $y$-axis is $\alpha$, which ranges from $0$ to $30$ with an increment of $5$. Darker pixels represent lower empirical probability of success. The red curve is the information-theoretic threshold $\sqrt{\alpha}-\sqrt{\beta}=\sqrt{2}$.}
	\label{fig-1}
\end{figure*}

\begin{figure*}[!htbp]
	\begin{minipage}[b]{0.32\linewidth}
		\centerline{\includegraphics[width=\linewidth]{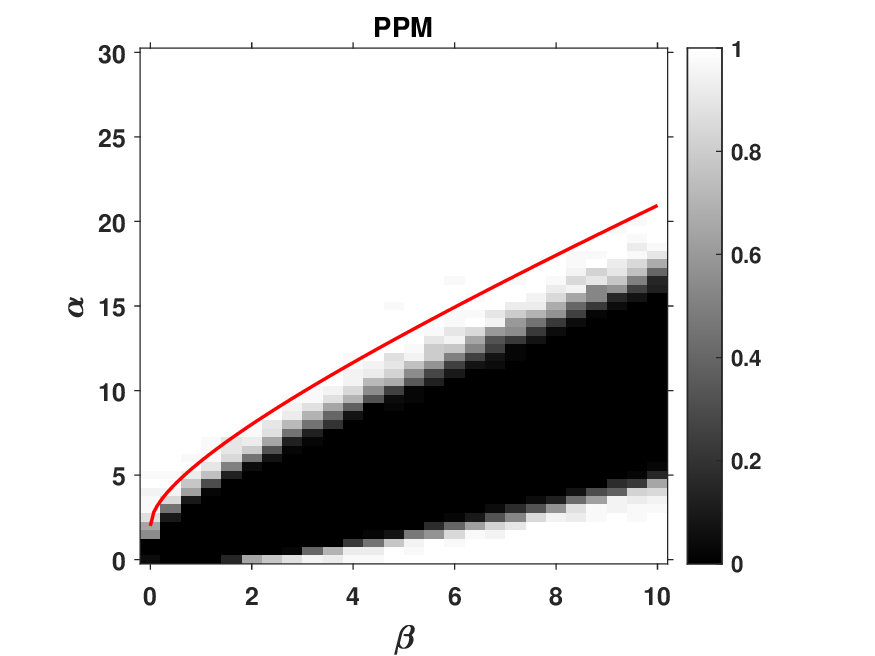}}
		\centerline{(a) PPM} \smallskip
	\end{minipage}
	\begin{minipage}[b]{0.32\linewidth}
		\centerline{\includegraphics[width=\linewidth]{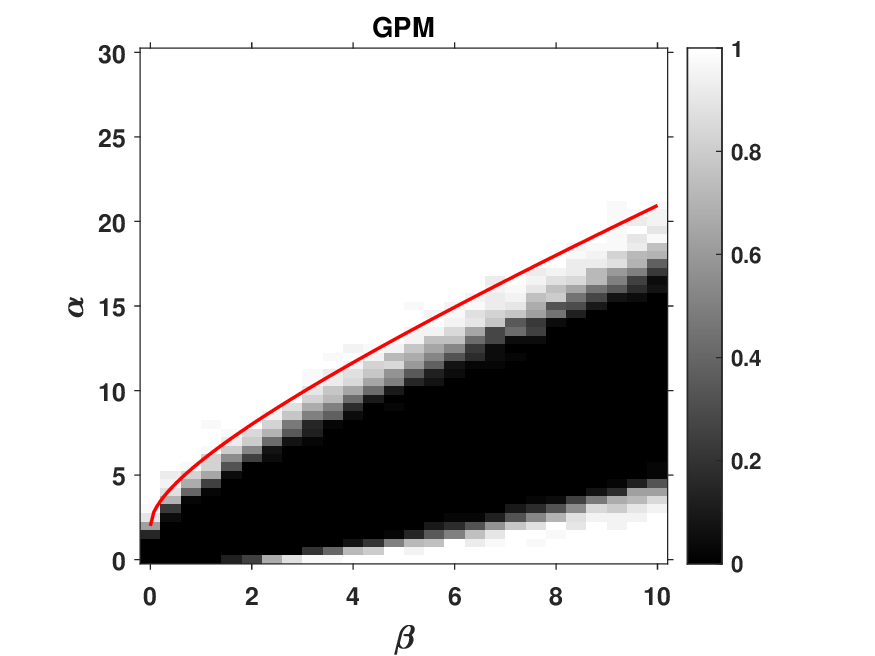}}
		\centerline{(b) GPM} \smallskip
	\end{minipage}
	\begin{minipage}[b]{0.32\linewidth}
		\centerline{\includegraphics[width=\linewidth]{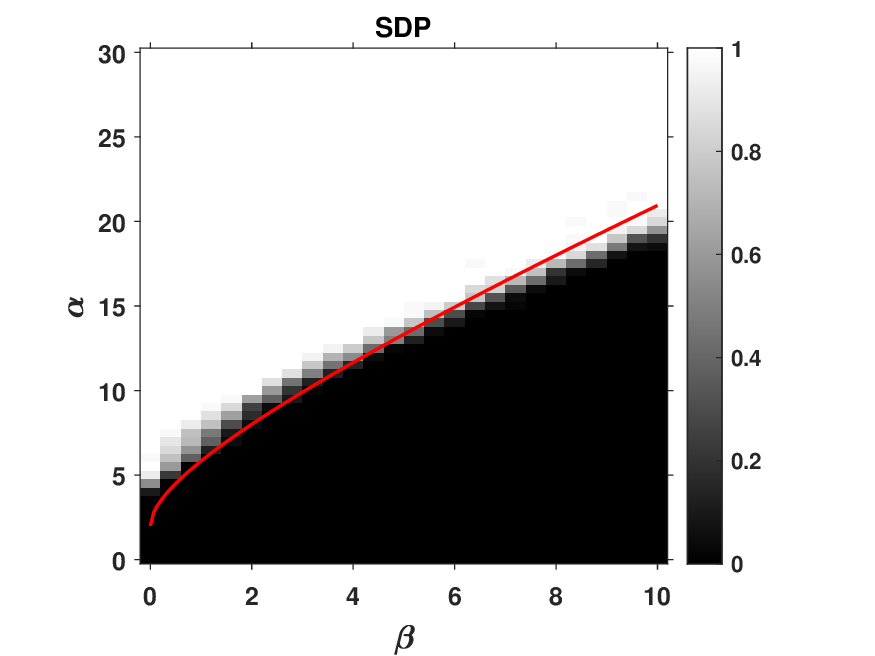}}
		\centerline{(c) SDP}\smallskip
	\end{minipage}
	\begin{center}
	\begin{minipage}[b]{0.32\linewidth}
		\centerline{\includegraphics[width=\linewidth]{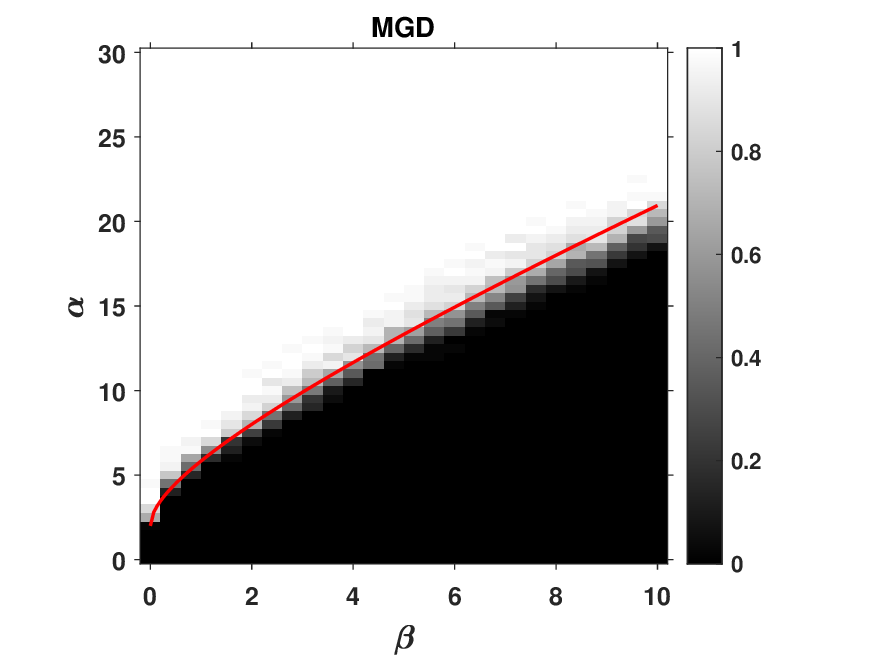}}
		\centerline{(d) MGD}\smallskip
	\end{minipage}
	\begin{minipage}[b]{0.32\linewidth}
		\centerline{\includegraphics[width=\linewidth]{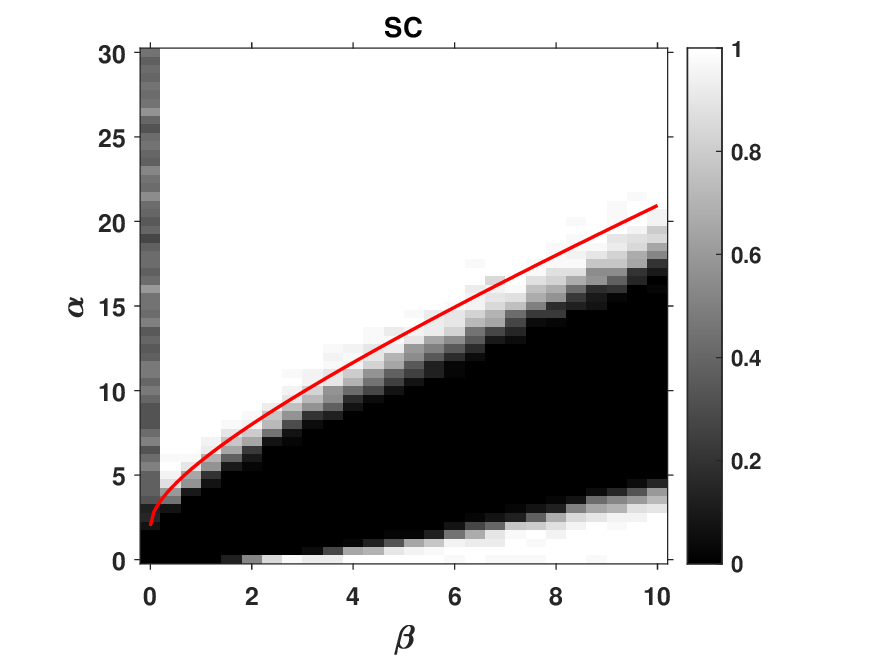}}
		\centerline{(e) SC}\smallskip
	\end{minipage}
	\end{center}
	\caption{Phase transition in graphs generated by the SBM without self-loops: The $x$-axis is $\beta$, which ranges from $0$ to $10$ with an increment of $2$; the $y$-axis is $\alpha$, which ranges from $0$ to $30$ with an increment of $5$. Darker pixels represent lower empirical probability of success. The red curve is the information-theoretic threshold $\sqrt{\alpha}-\sqrt{\beta}=\sqrt{2}$.}
	\label{fig-6}
\end{figure*}

\begin{table}[!htbp]
\caption{Phase transition: Total CPU time (in seconds) of the different approaches.}
\label{table-1}
\vskip 0.15in
\begin{center}
\begin{small}
\begin{tabular}{cccccc}
\hline
Methods & PPM & GPM & SDP &  MGD & SC\\
\hline
Self-loops & 18 &  {\bf 14} & 8195 & 922 & 104\\
No self-loop &  19 & {\bf 14} & 8356 & 932 & 103 \\
\hline
\end{tabular}
\end{small}
\end{center}
\end{table}

\vspace{-0.5cm}

\subsection{Convergence Performance}

Next, we study the convergence performance of PPM and MGD, both of which have similar per-iteration cost, and report the number of iterations needed to exactly identify the two communities in graphs generated according to the binary symmetric SBM. We do not report the performance of GPM, SDP, and SC, as GPM and PPM have very similar performance, SDP cannot be solved to high accuracy using ADMM, and SC can be directly solved using the MATLAB function \textsf{eigs}. We conduct 3 sets of numerical tests each on graphs with and without self-loops, which correspond to $\beta \in \{4,8,16\}$. In each set, we generate 5 graphs of dimension $n=2000$, which correspond to $\alpha=(\sqrt{\beta}+\sqrt{2})^2+i$ for $i \in \{1,2,3,4,5\}$. Such a setting ensures that the information-theoretic threshold for exact recovery is met. Let $\bx^k$ and $\bm{Q}^k$ denote $k$-th iterate of PPM and MGD, respectively. In Figures \ref{fig-2} and \ref{fig-5}, we plot the distances of the iterates to the ground truth $\|\bx^k\bx^{k^T}-\bx^*\bx^{*^T}\|_F$ and $\|\bm{Q}^k\bm{Q}^{k^T}-\bx^*\bx^{*^T}\|_F$ against the iteration number for PPM and MGD, respectively. It can be observed that PPM exhibits a finite termination phenomenon and converges to the ground truth much faster than MGD in graphs both with and without self-loops. This also corroborates the iteration complexity established in Theorem~\ref{thm:main} and Section~\ref{sec:no-loop}.

\begin{figure*}[!htbp]
\begin{center}
	\begin{minipage}[b]{0.45\linewidth}
		\centering
		\centerline{\includegraphics[width=\linewidth]{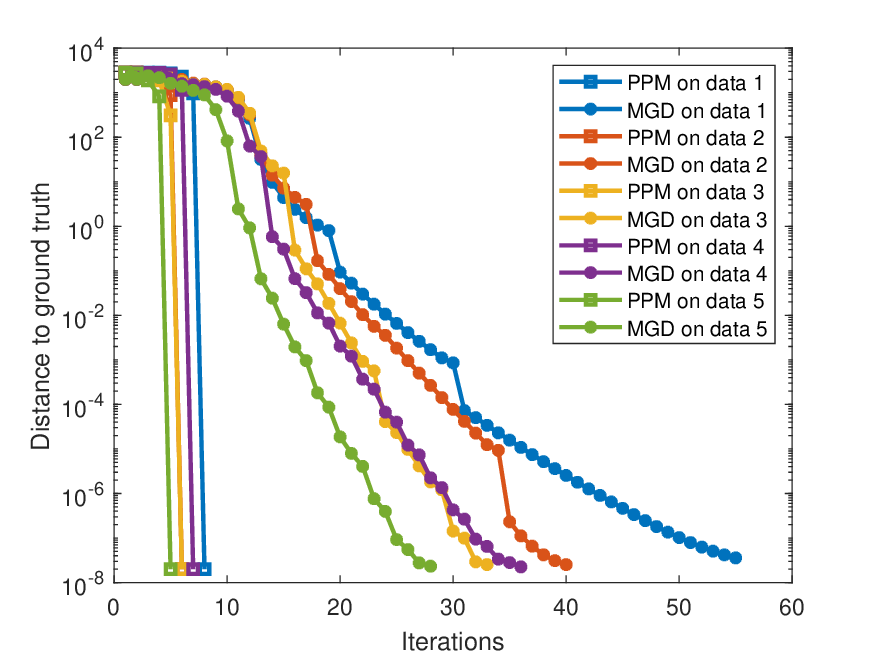}}
		\centerline{(a) $\beta=4$}\smallskip
	\end{minipage}
	\begin{minipage}[b]{0.45\linewidth}
		\centering
		\centerline{\includegraphics[width=\linewidth]{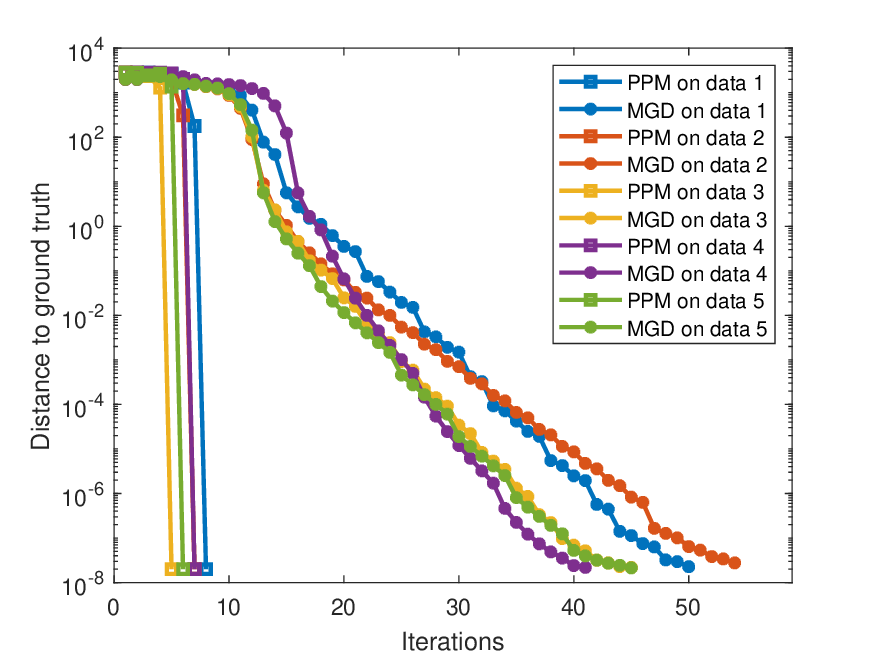}}
		\centerline{(b) $\beta=8$}\smallskip
	\end{minipage}
	\begin{minipage}[b]{0.45\linewidth}
		\centering
		\centerline{\includegraphics[width=\linewidth]{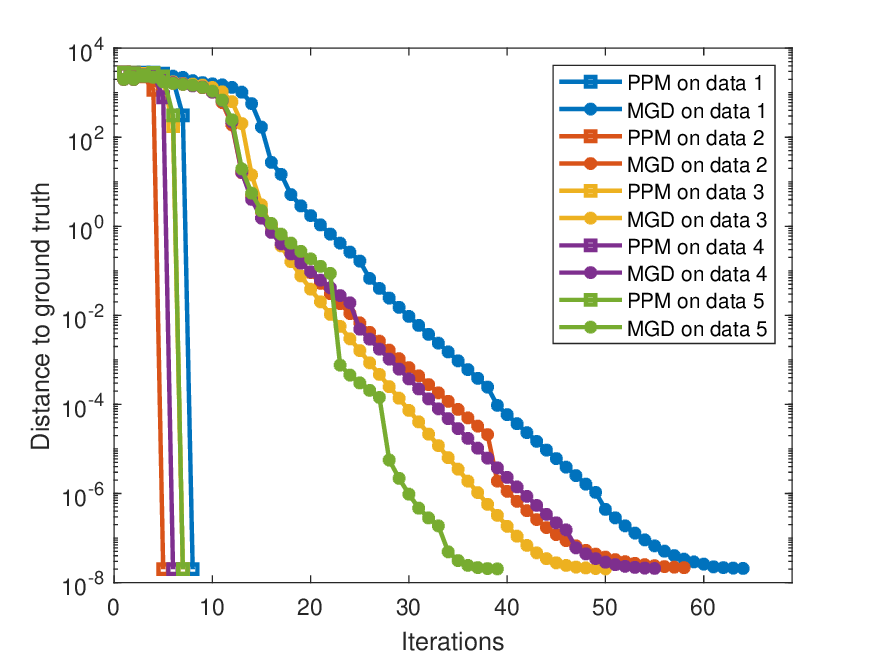}}
		\centerline{(c) $\beta=16$}\smallskip
	\end{minipage}
\end{center}
	\caption{Convergence performance on graphs generated by the SBM with self-loops: The $x$-axis is number of iterations, the $y$-axis is distance to ground truth, which is given by $\|\bx^k\bx^{k^T} - \bxs\bx^{*^T}\|_F$ for PPM and $\|\bm{Q}^k\bm{Q}^{k^T} - \bxs\bx^{*^T}\|_F$ for MGD. Here, $\bx^k$ and $\bm{Q}^k$ are the $k$-th iterates generated by the PPM and the MGD, respectively.}
	\label{fig-2}
\end{figure*}
\begin{figure*}[!htbp]
\begin{center}
	\begin{minipage}[b]{0.45\linewidth}
		\centering
		\centerline{\includegraphics[width=\linewidth]{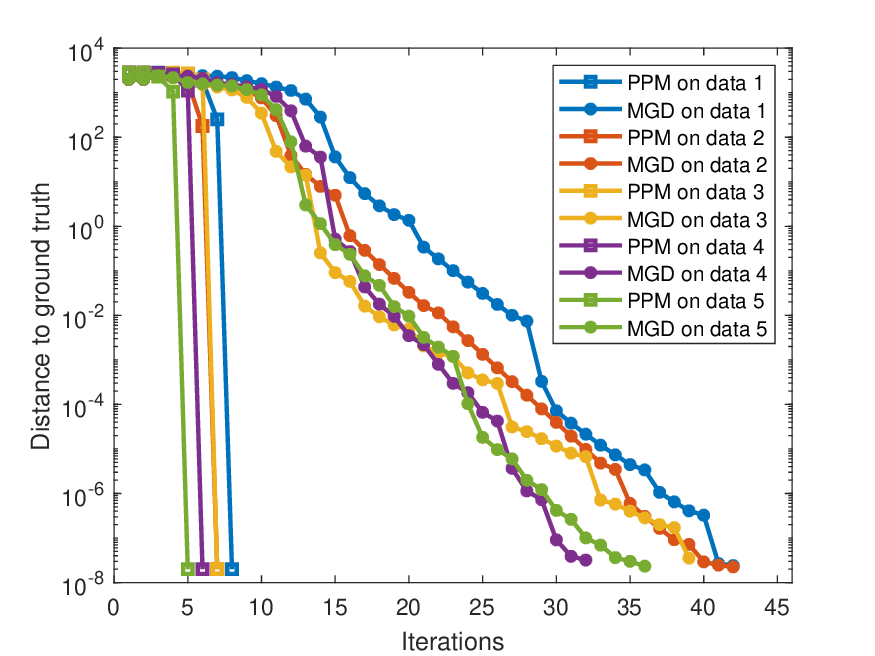}}
		\centerline{(a) $\beta=4$}\smallskip
	\end{minipage}
	\begin{minipage}[b]{0.45\linewidth}
		\centering
		\centerline{\includegraphics[width=\linewidth]{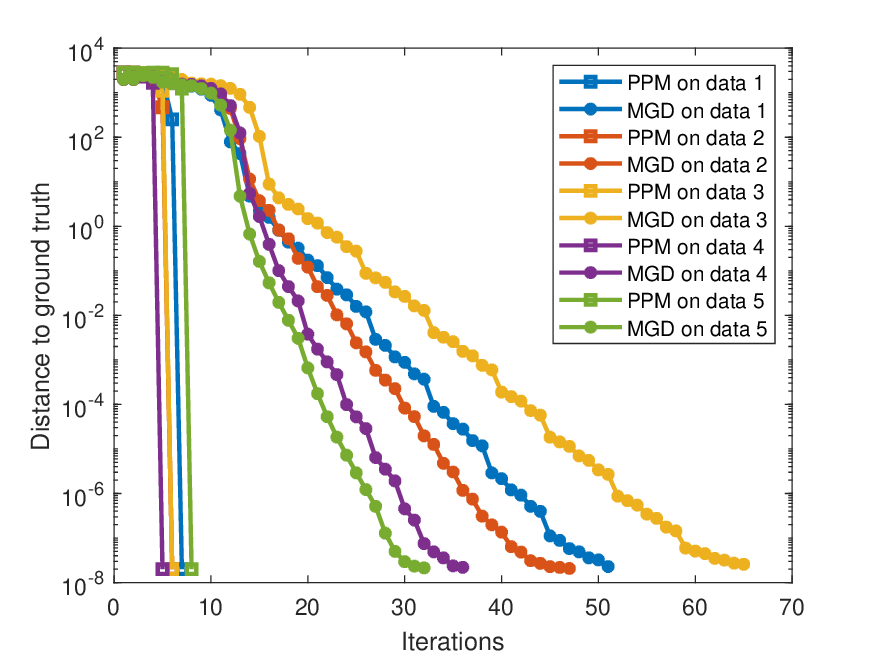}}
		\centerline{(b) $\beta=8$}\smallskip
	\end{minipage}
	\begin{minipage}[b]{0.45\linewidth}
		\centering
		\centerline{\includegraphics[width=\linewidth]{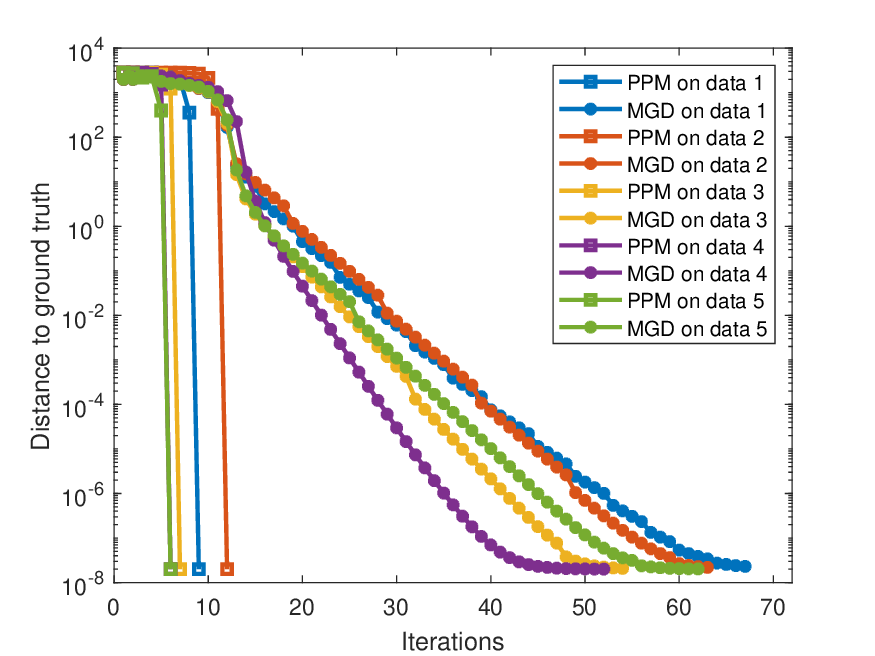}}
		\centerline{(c) $\beta=16$}\smallskip
	\end{minipage}
\end{center}
	\caption{Convergence performance on graphs generated by the SBM without self-loops: The $x$-axis is number of iterations, the $y$-axis is distance to ground truth, which is given by $\|\bx^k\bx^{k^T} - \bxs\bx^{*^T}\|_F$ for PPM and $\|\bm{Q}^k\bm{Q}^{k^T} - \bxs\bx^{*^T}\|_F$ for MGD. Here, $\bx^k$ and $\bm{Q}^k$ are the $k$-th iterates generated by the PPM and the MGD, respectively.}
	\label{fig-5}
\end{figure*}

\vspace{-0.5cm}
\subsection{Computational Efficiency}

In this sub-section, we compare the computational efficiency of our proposed method with GPM, MGD, SDP, and SC on both synthetic and real data sets. For the synthetic data sets, we fix $\beta=16$, $\alpha=(\sqrt{\beta}+\sqrt{2})^2+1$, and generate three graphs of dimension $n = 2000$, $10000$, and $20000$, respectively. For the real ones, we use the data sets \emph{polbooks} and \emph{polblogs} downloaded from UF Sparse Matrix Collection \cite{davis2011university}.\footnote{\url{https://sparse.tamu.edu/}}  Since these real-world networks have unbalanced or multiple communities, we extract 2 balanced communities from them. The sizes of each community extracted from \emph{polbooks} and \emph{polblogs} are 43 and 732, respectively. The stopping criteria for the tested algorithms are set as follows. For PPM, we terminate it when $\| \bx^k - \by^k\|_2 < 10^{-3}$ for some $\by^k \in \mP(\bx^k+\bA\bx^k)$; for GPM, we terminate it when $\| \bx^k - \by^k\|_2 < 10^{-3}$ for some $\by^k \in \mbox{sgn}\left(\bx^k+\bA\bx^k-\bo_n^T\bA\bo_n/n^2\cdot \bo_n^T\bx^k\bo_n\right)$; for MGD, we terminate it when the norm of the manifold gradient is less then $10^{-3}$; for ADMM, we terminate it when the norm of the difference of two consecutive iterates is less than $10^{-3}$. No stopping criterion is needed for SC as it simply employs the MATLAB \textsf{eigs} function with some post-processing. We run each algorithm 10 times from randomly generated initial points and select the best solution (in terms of function value) as its recovery solution. Moreover, we set the maximum iteration number as 2000 for every algorithm. To compare the computational efficiency of the tested algorithms, we record their CPU time, averaged over 10 runs, and present the results in Table~\ref{tab-1}. It can be observed that our proposed method is nearly as fast as GPM, slightly better than MGD and SC, and substantially faster than SDP. 

All the tested methods can achieve exact recovery on synthetic data sets. Their recovery performance on the two real data sets \emph{polbooks} and \emph{polblogs} are presented in Figures \ref{fig-3} and \ref{fig-4}, respectively. According to the ground truth of \emph{polbooks}, the number of misclassified vertices by PPM, GPM, SDP, MGD, and SC are 0, 4, 1, 4, and 3, respectively. As for \emph{polblogs}, the number of misclassified vertices by PPM, GPM, SDP, MGD, and SC are 64, 698, 289, 294, and 194, respectively. These results demonstrate that our proposed method is comparable to SDP, MGD, SC and is better than GPM in terms of recovery performance on the two real data sets.  

\begin{table}[!htbp]
	\caption{CPU times (in seconds) of the algorithms on synthetic and real data sets.}
	\label{tab-1}
	\begin{center}
	\begin{threeparttable}
		\begin{tabular}{ c c c c c c}
			\hline
			& PPM & GPM & SDP & MGD & SC   \\ \hline
			$n=2000$ &  0.005 & {\bf 0.004} & 79.47 & 0.313 & 0.014 \\
			$n=10000$ & 0.031 & {\bf 0.028}  & --\tnote{*}	& 2.562 & 0.113  \\
			$n=20000$ & {\bf 0.074} & {\bf 0.074} &	--	& 4.687	& 0.268 \\ 
			\emph{polbooks} &  0.003 & {\bf 0.002} & 0.172 & 0.059 & 0.098 \\ 
			\emph{polblogs} & {\bf 0.019} & 0.102 & 492.1 & 6.118 & 0.021 \\  \hline 			
		\end{tabular}
		\begin{tablenotes}
 			\item[*] ``--'' denotes out of memory.
        \end{tablenotes}
   	\end{threeparttable}
	\end{center}
\end{table}

\begin{figure*}[t]
	\centering
	\begin{minipage}[b]{0.325\linewidth}
		\centering
		\centerline{\includegraphics[width=\linewidth]{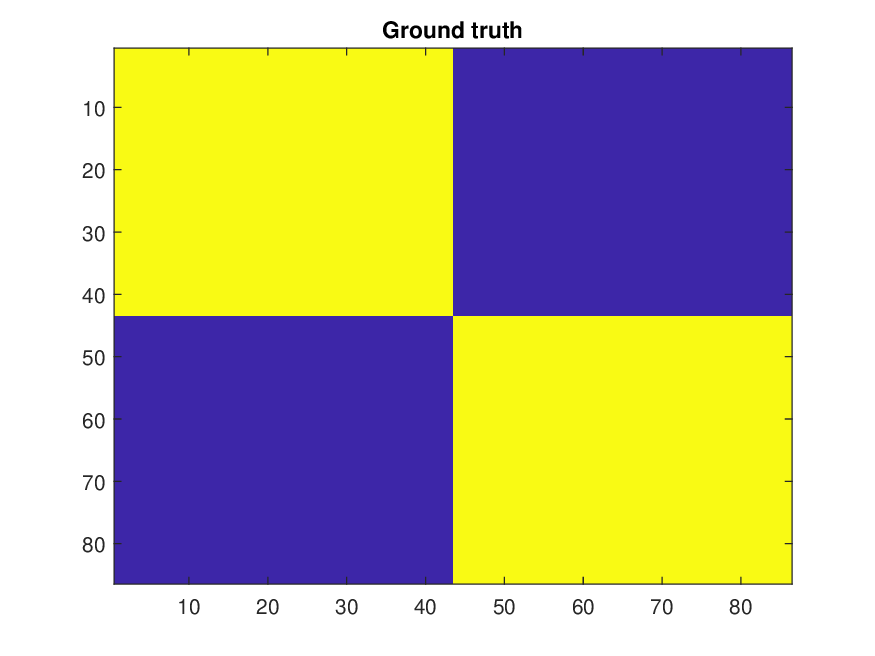}}
		\centerline{(a) Ground Truth}\medskip
	\end{minipage}
	\hfill
	\begin{minipage}[b]{0.325\linewidth}
		\centering
		\centerline{\includegraphics[width=\linewidth]{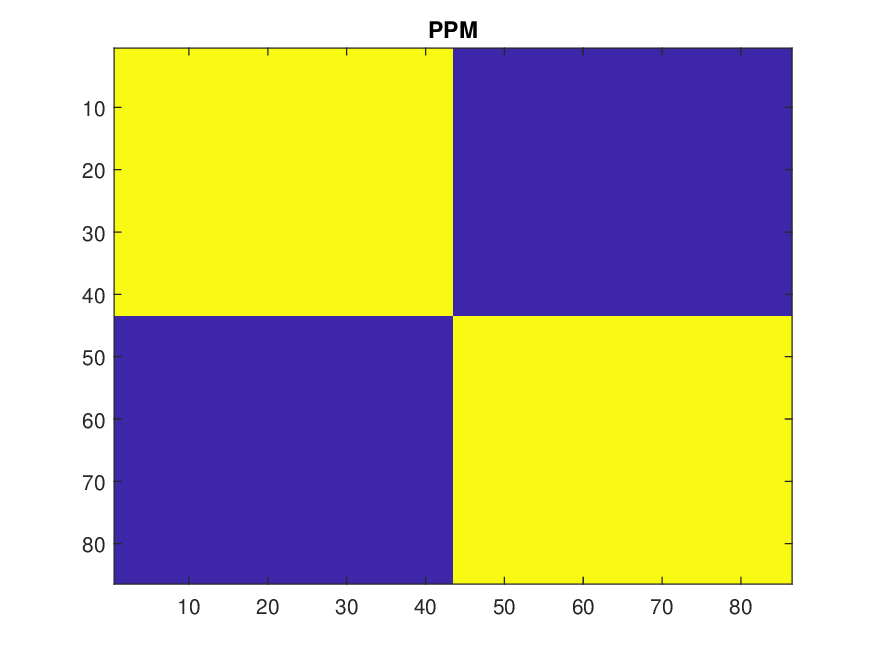}}
		\centerline{(b) PPM}\medskip
	\end{minipage}
	\hfill
	\begin{minipage}[b]{0.325\linewidth}
		\centering
		\centerline{\includegraphics[width=\linewidth]{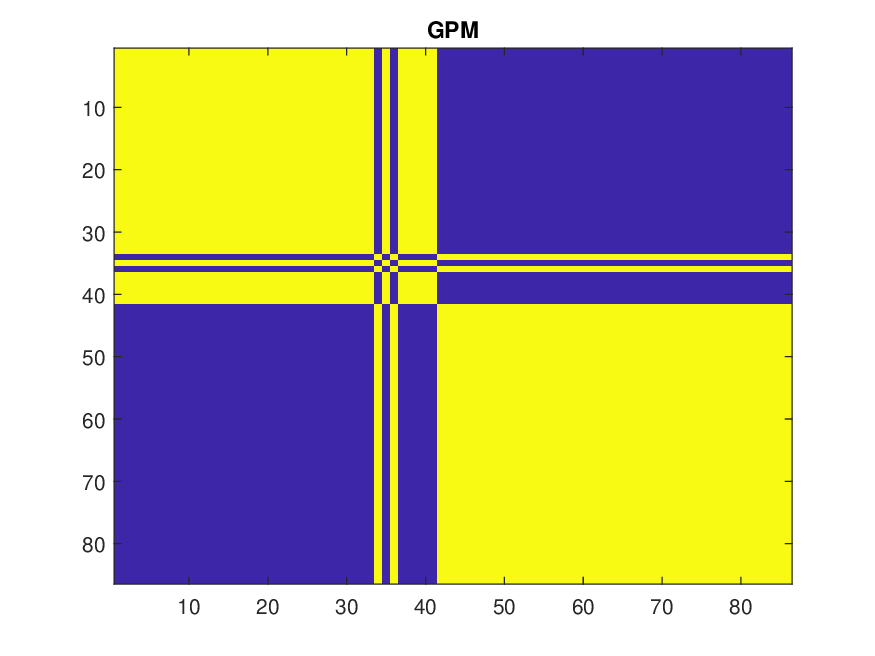}}
		\centerline{(c) GPM}\medskip
	\end{minipage}
	\hfill
	\begin{minipage}[b]{0.325\linewidth}
		\centering
		\centerline{\includegraphics[width=\linewidth]{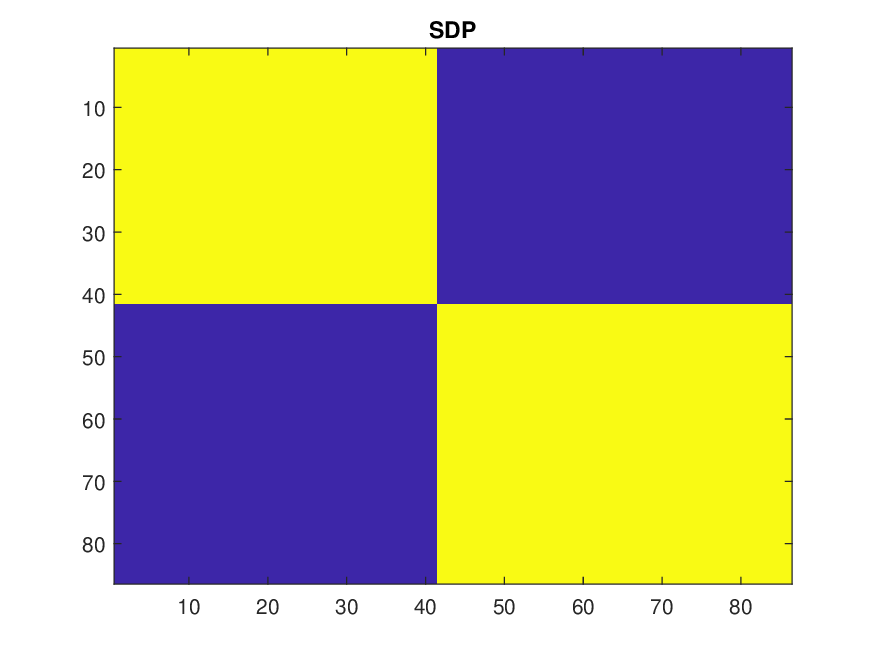}}
		\centerline{(d) SDP}\medskip
	\end{minipage}
		\hfill
	\begin{minipage}[b]{0.325\linewidth}
		\centering
		\centerline{\includegraphics[width=\linewidth]{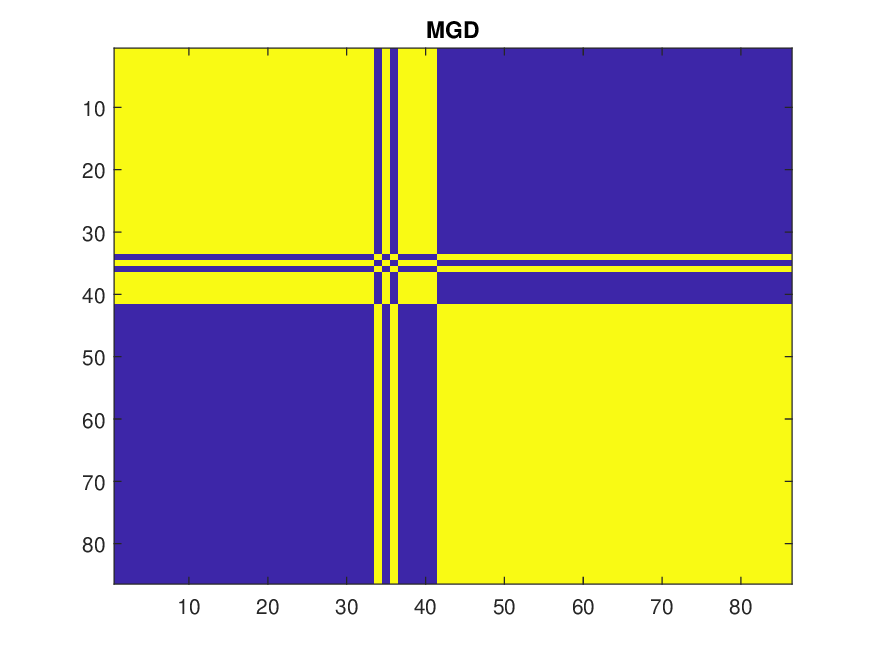}}
		\centerline{(e) MGD}\medskip
	\end{minipage}
	\hfill
	\begin{minipage}[b]{0.325\linewidth}
		\centering
		\centerline{\includegraphics[width=\linewidth]{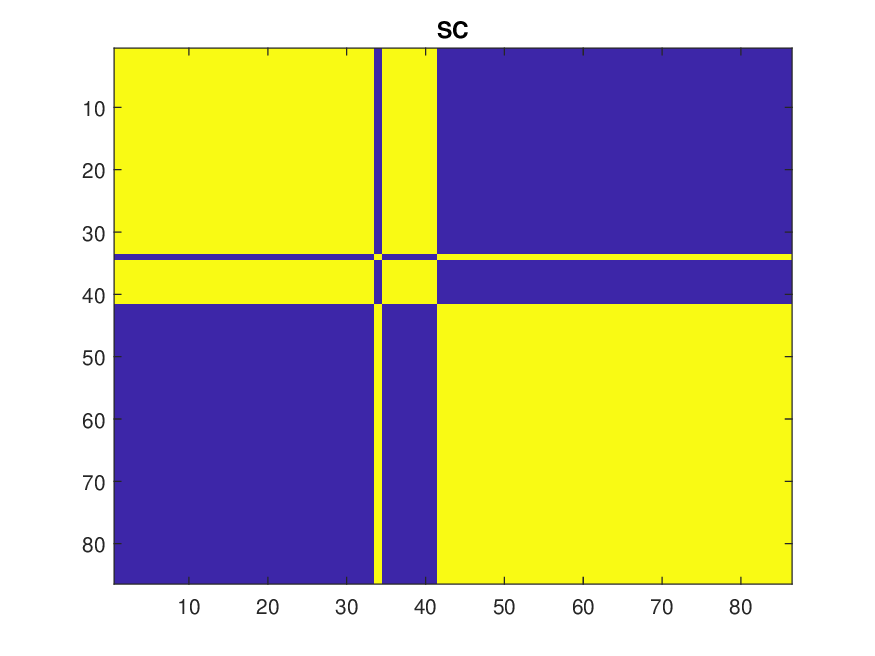}}
		\centerline{(f) SC}\medskip
	\end{minipage}
	\vskip -0.1in
	\caption{Recovery performance on the \emph{polbooks} network: The $x$-axis and $y$-axis give the labels of the vertices (from vertex $1$ to vertex $86$) in the network \emph{polbooks}.}
	\label{fig-3}
\end{figure*}

\begin{figure*}[t]
	\centering
	\begin{minipage}[b]{0.325\linewidth}
		\centering
		\centerline{\includegraphics[width=\linewidth]{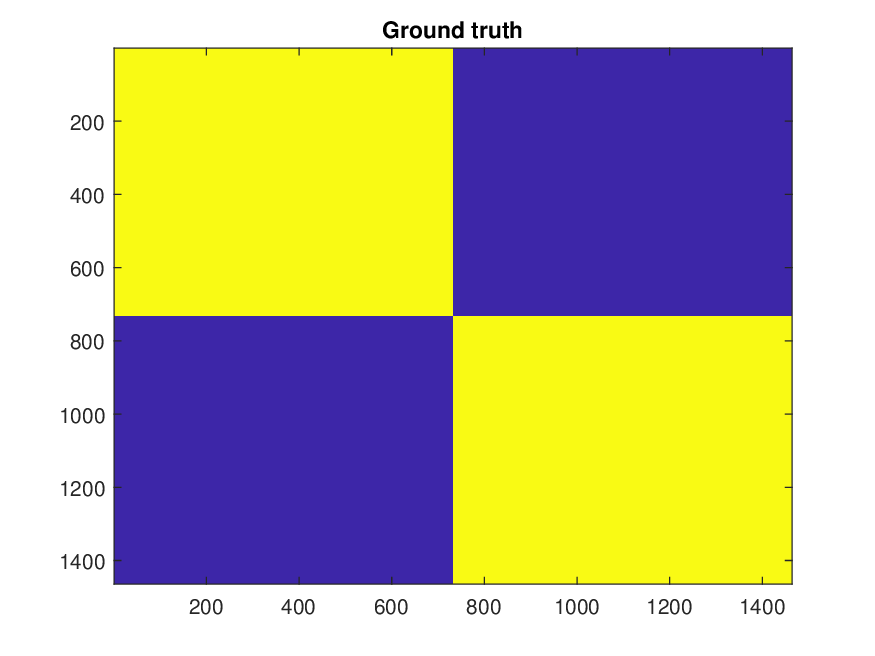}}
		\centerline{(a) Ground truth}\medskip
	\end{minipage}
	\hfill		
	\begin{minipage}[b]{0.325\linewidth}
		\centering
		\centerline{\includegraphics[width=\linewidth]{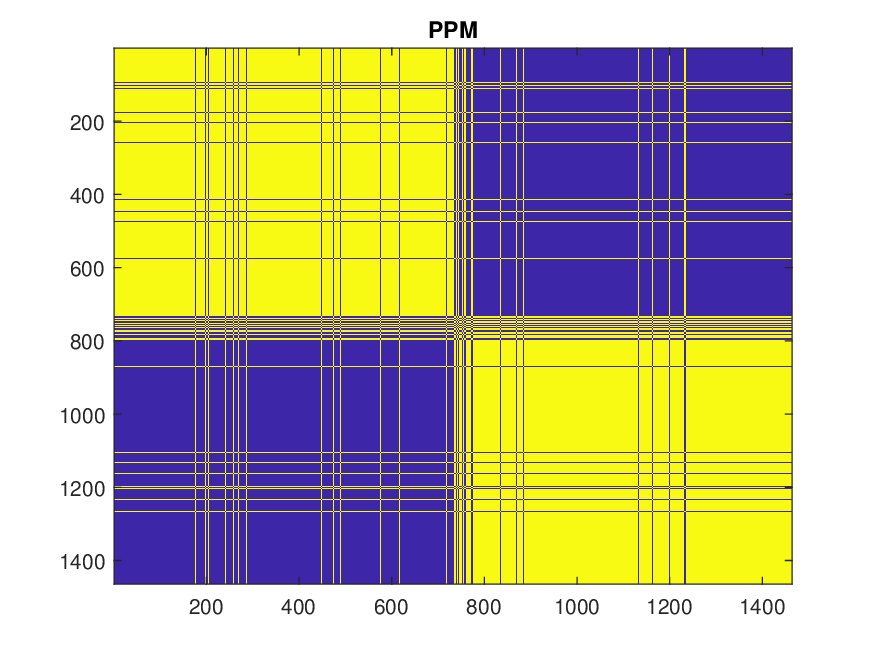}}
		\centerline{(b) PPM}\medskip
	\end{minipage}
	\hfill
	\begin{minipage}[b]{0.325\linewidth}
		\centering
		\centerline{\includegraphics[width=\linewidth]{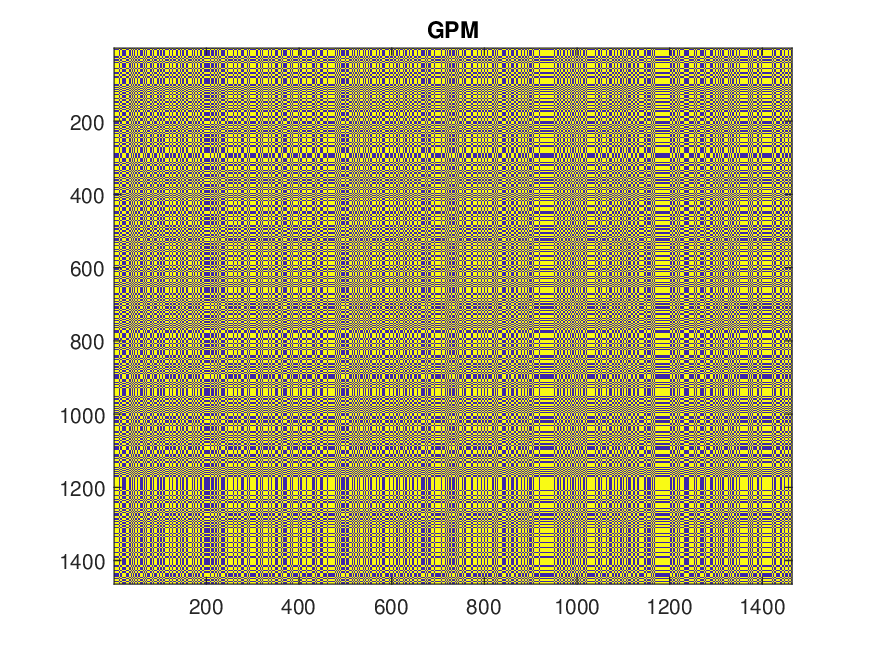}}
		\centerline{(c) GPM}\medskip
	\end{minipage}
	\hfill
	\begin{minipage}[b]{0.325\linewidth}
		\centering
		\centerline{\includegraphics[width=\linewidth]{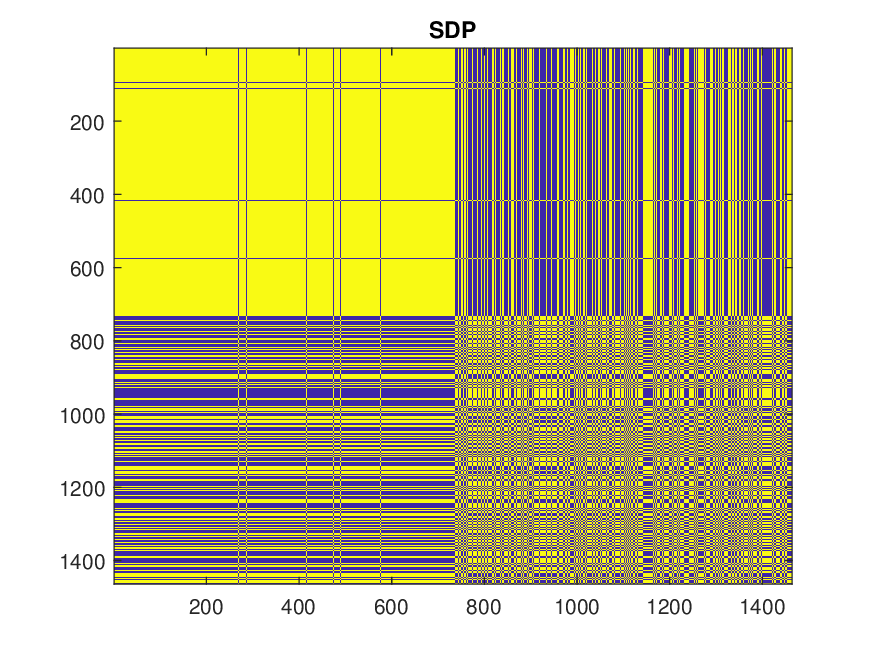}}
		\centerline{(d) SDP}\medskip
	\end{minipage}
	\hfill
	\begin{minipage}[b]{0.325\linewidth}
		\centering
		\centerline{\includegraphics[width=\linewidth]{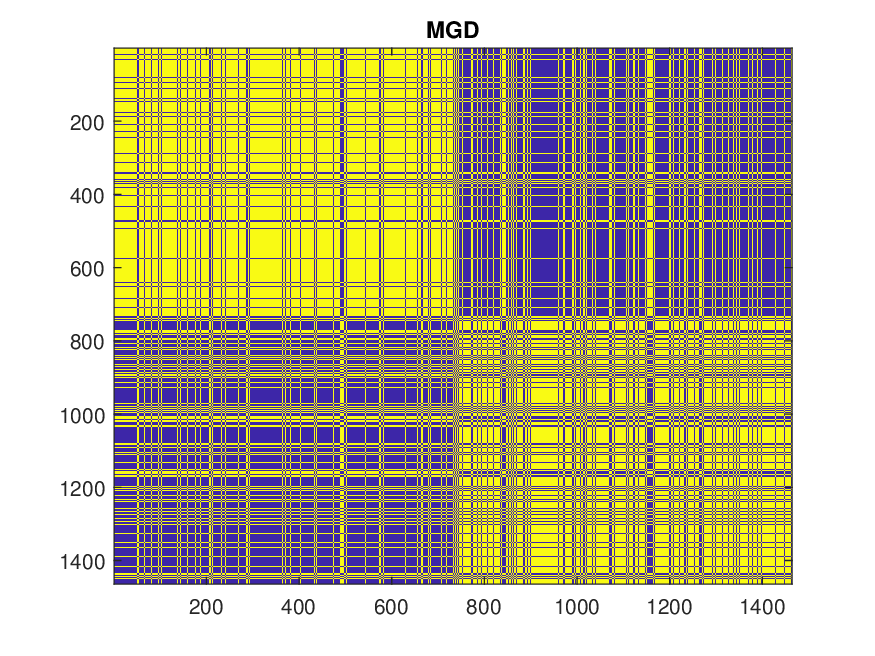}}
		\centerline{(e) MGD}\medskip
	\end{minipage}
	\hfill
	\begin{minipage}[b]{0.325\linewidth}
		\centering
		\centerline{\includegraphics[width=\linewidth]{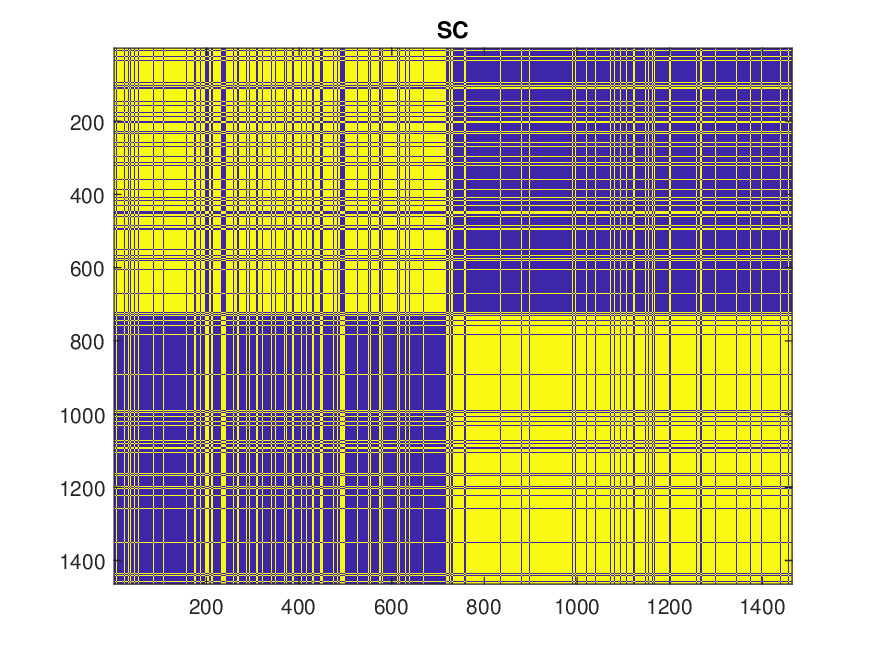}}
		\centerline{(f) SC}\medskip
	\end{minipage}
	\vskip -0.1in
	\caption{Recovery performance on the \emph{polblogs} network: The $x$-axis and $y$-axis give the labels of the vertices (from vertex $1$ to vertex $1464$) in the network \emph{polblogs}.}
	\label{fig-4}
\end{figure*}

\section{Conclusions} \label{sec:concl}

In this work, we proposed a two-stage iterative algorithm that provably achieves exact recovery down to the information-theoretic limit in the binary symmetric SBM and has a runtime of $\mO(n\log^2n/\log\log n)$. The complexity bound is among the best for algorithms that have the same recovery performance. In the process of establishing our main results, we developed new analyses of the orthogonal iterations and projected power iterations used in our algorithm, which could be of independent interest. Our numerical results on synthetic and real data sets demonstrate the strong recovery performance and high computational efficiency of the proposed algorithm. A natural future direction is to extend the proposed approach to tackle recovery tasks in more general SBMs.


\bibliographystyle{spmpsci}      
\bibliography{stochastic-block-model}   


\end{document}